\definecolor{BLUE}{RGB}{41,86,143}
\definecolor{RED}{RGB}{178,31,53}
\title{MULTIDIMENSIONAL RANDOM WALKS CONDITIONED TO STAY ORDERED VIA GENERALIZED LADDER HEIGHT FUNCTIONS}
\newtheorem{teo}{Theorem}
\newtheorem{lemma}{Lemma}
\newtheorem{propo}{Proposition}
\newtheorem{remark}{Remark}
\newcommand{\bb}[1]{\mathbb{#1}}
\newcommand{\E}{\ensuremath{ \bb{E} } }
\newenvironment{lesn}{\begin{linenomath}\begin{equation*}}{\end{equation*}\end{linenomath}}
\newcommand{\re}{\ensuremath{\mathbb{R}}}
\newcommand{\weyl}{\ensuremath{\mathbb{W}}}
\newcommand{\paren}[1]{\ensuremath{\left( #1\right) }}
\newcommand{\F}{\ensuremath{\mathscr{F}}}
\newcommand{\p}{\mathbb{P}}
\newenvironment{esn}{\begin{equation*}}{\end{equation*}}
\newcommand{\se}{\ensuremath{\mathbb{E}}}
\newcommand{\z}{\ensuremath{\mathbb{Z}}}
\newcommand{\na}{\ensuremath{\mathbb{N}}}
\newcommand{\indi}[1]{\si\left\{ #1\right\}}
\newcommand{\si}{{\ensuremath{\bf{1}}}}
\newcommand{\esp}[1]{\ensuremath{\se\! \left( #1 \right)}}
\newcommand{\proba}[1]{\ensuremath{\sip\! \left( #1 \right)}}
\newcommand{\sip}{\mathbb{P}}
\newcommand{\probac}[2]{\ensuremath{\sip\! \left( #1 \, | #2 \right)}}
\newcommand{\bo}[1]{\ensuremath{{\bf #1 } }}
\def\BState{\State\hskip-\ALG@thistlm}
\author[Angtuncio-Hern\'andez]{Osvaldo Angtuncio-Hern\'andez}
\address[OAH]{Instituto de Matem\'aticas, Universidad Nacional Aut\'onoma de M\'exico.  
	%\'Area de la investigaci\'on cient\'ifica,
	%Circuito Exterior de Ciudad Universitaria, 
	Distrito Federal CP 04510,  M\'exico}
\thanks{Research supported by CoNaCyT grant FC-2016-1946 and UNAM-DGAPA-PAPIIT grant IN115217.} %%% Revisar clave
\email{osvaldo.angtuncio@matem.unam.mx}
\subjclass[2010]{60G50, 60J10}
\keywords{Ordered random walks, Doob h-transform, harmonic function, Weyl chamber, renovation function of a random walk, multidimensional ladder height process.}
\begin{document}
%\tableofcontents

%\begin{titlepage}
	
\maketitle

%%%%%%%%%%%%%%%%%%%%%%%%%%%%%%%%%%%%%%%%%%%%%%%%%%%%%
%\newpage

%\center{MULTIDIMENSIONAL RANDOM WALKS CONDITIONED TO STAY ORDERED VIA GENERALIZED LADDER HEIGHT FUNCTIONS}

\begin{abstract}
%In this paper we define a $d$-dimensional random walk conditioned to have ordered components forever, for any $d\in \na$, under minimal assumptions. 
%This is done using (sub)harmonic functions and the associated Doob $h$-transforms. 
%The main tool is to construct a \emph{ladder height function} for the random walk, which is based on a generalization of the ladder times in the unidimensional case.
%The construction is equivalent as the limit as $q\downarrow 0$ of the random walk conditioned to have ordered 	components up to a geometric time of parameter $1/q$. 
%Our construction works with minimal hypotheses, in particular, the components of $X$ could take values on $\re^d$, be dependent, have different distributions, or have infinite mean. 

Random walks conditioned to stay positive are a prominent topic in fluctuation theory. 
One way to construct them is as a random walk conditioned to stay positive up to time $n$, and let $n$ tend to infinity. 
A second method is conditioning instead to stay positive up to an independent geometric time, and send its parameter to zero.
%; this approach uses excursion theory. 
The multidimensional case (condition the components of a $d$-dimensional random walk to be ordered) was solved in \cite{MR2430709} using the first approach, but some moment conditions need to be imposed.
Our approach is based on the second method, which has the advantage to require a minimal restriction, needed only for the finiteness of the $h$-transform in certain cases.
We also characterize when the limit is Markovian or sub-Markovian, and give several reexpresions of the $h$-function.
Under some conditions given in \cite{2018arXiv180305682I}, it can be proved that our $h$-function is the only harmonic function which is zero outside the Weyl chamber $\{x=(x_1,\ldots, x_d)\in \re^d: x_1<\cdots < x_d\}$.
\end{abstract}
%\end{titlepage}

\section{Introduction and main results}

%Let $X^1,\ldots, X^d$ be independent, simple, symmetric random walks on $\mathbb{Z}$.
%Let $\p_{\bo{i}}$ be the probability measure of $X=(X^1,\ldots,X^d )$ starting at $\bo{i}=(i_1,\ldots, i_d)$. 

\subsection{Motivation}

Let $X^1,\ldots, X^d$ be independent, simple, symmetric random walks on $\mathbb{Z}$.
Let $\p_{\bo{i}}$ be the probability measure of $X=(X^1,\ldots,X^d )$ starting at $\bo{i}=(i_1,\ldots, i_d)$. 
In \cite{MR1887625} and \cite{MR2430709}, the authors define $X$ \emph{conditioned to have ordered components}.
The interest on such processes is by their relation with random matrix theory, for example, with Dyson's Brownian motion \cite{MR0148397}, which can be interpreted as $d$ Brownian motions conditioned to stay ordered at all times.
As another important connection, conditioning a 2-dimensional random walk to have ordered components, is equivalent to condition a random walk to stay non-negative, a theory with a long history (see \cite{MR1001739,MR1159575,MR1232850,MR1787140,MR1331218,
	%MR1329107,B.M.Hambly2001,MR1942425,MR2197112,
	MR1844435,MR2073341,MR2164035,MR2451576,MR2447836,
	%MR3068391,
	grama2016limit}).
%MR2670195,
The conditioning event can be written as
\begin{lesn}
	A=\{X^1_j<\cdots<X^d_j\mbox{ for all }j\geq 0 \}.
\end{lesn}Denoting by $\weyl=\{x\in \re^d:x_1< \cdots < x_d \}$ the \emph{Weyl chamber},
the conditioning event $A$ can be rewritten as $\{X_j\in \weyl,\forall j\geq 0 \}$.
Note that, even in the case $i_1<\cdots <i_d$, we have
\begin{lesn}
	\p_{\bo{i}}(A)=0,
\end{lesn}since $X^2-X^1$ is an oscillating random walk. 
Therefore, a rigorous definition of the law of $X$ conditioned on $A$ should be given. 
This is done in \cite{MR1887625} and \cite{MR2430709} (as a particular case of their results), introducing the event
\begin{lesn}
	A_n=\{X^1_j< \cdots < X^d_j\mbox{ for all }j\in [n] \}\ \ \ \ \ \ \ \ \ n\in \na,
\end{lesn}with $[n]=\{1,\ldots, n \}$, and proving that for every $k\in \na$, the limit as $n\to \infty$
\begin{lesn}
	\p_{\bo{i}}\paren{X_0=\bo{i}_0,\ldots, X_k=\bo{i}_k|A_n}
\end{lesn}exists and is a probability measure.
In fact, Karlin-McGregor's formula (cf. \cite{MR0114248}) gives us an expression for $\p_{\bo{i}}(A_n)$ and implies
\begin{lesn}
	\lim_n\p_{\bo{i}}\paren{X_0=\bo{i}_0,\ldots, X_k=\bo{i}_k|A_n}=\E_{\bo{i}}\paren{\frac{\Delta(\bo{i}_k)}{\Delta(\bo{i}_0)}\indi{X_0=\bo{i}_0,\ldots, X_k=\bo{i}_k}}, 
\end{lesn}where, for $x=(x_1,\ldots, x_d)\in \mathbb{W}$
\begin{equation}\label{eqnVandermondeDeterminant}
\Delta(x)=\prod_{1\leq i<j\leq d}(x_j-x_i)=\det \paren{\paren{x^{i-1}_j},i,j\in [d]}.
\end{equation}is the Vandermonde's determinant. 
Hence, the conditioning is made using Doob's \emph{$h$-transform}.
Similar transformations, also called \emph{$h$-transforms} or \emph{$h$-process}, appear in \cite{MR1463727}.
We must emphasize that most of the papers constructing ordered random walks are based on finding the limit as $n$ goes to infinity of $\p_{\bo{i}}(A_n)/\p(A_n)$.
Such limit will be the associated \emph{$h$-function} of the process.

The objective is to generalize known constructions to $d$-dimensional random walks conditioned to have ordered components. 
In particular, the components of $X$ could be dependent or have different distributions. 
%\comentario{The most general construction is given in \cite{MR2430709}, under the hypotheses of i.i.d. components, some moment assumption and the central limit theorem.
%In \cite{MR2609589}, the authors found the optimal moment, and also remove the central limit theorem assumption.}
Some models in the literature are \cite{MR2430709,MR3163211,MR3342657,MR3512425,2018arXiv180305682I}.
A general construction (when the drift is zero) is given in \cite{MR3342657}, for random walks in cones. 
The assumptions on the step distribution are that each of its components has mean zero, variance one, and zero covariance between components; also, a moment assumption is made on the step distribution.
The case of non-zero drift was solved in \cite{MR3163211}, using a Cram\'er condition.
Also, the recent paper \cite{2018arXiv180305682I} constructs ordered Markov chains without moment conditions, but the state space must be countable.

Our result has minimal assumptions. 
Define $Y=(X^2-X^1,X^3-X^2,\ldots, X^d-X^{d-1})$. 
In order to avoid trivial cases, we assume $Y$ has components taking negative and positive values with positive probability; 
besides that, the construction works with no further hypotheses when some component $Y^k$ drifts to $-\infty$, or every component $Y^k$ drifts to $+\infty$ (see Lemma \ref{lemmahUpArrowTransformWhenSomeComponentDriftsToMinusInftyOrAllToInfty}). 
In the remaining cases, the assumption on $Y$ is the existence of positive $\epsilon_1,\ldots, \epsilon_{d-1}$ such that
\begin{equation}\label{eqnHypothesisOrderedRandomWalk}
	\proba{X^2_1-X^1_1\geq \epsilon_1,\ldots, X^d_1-X^{d-1}_1\geq \epsilon_{d-1}}>0.
\end{equation}This condition is used only to prove the finiteness of the (sub)harmonic function $h$.

Our method is to analyze a random walk conditioned to have ordered components up to an independent geometric time $N$ of parameter $1-e^{-c}$, and take the limit as $c\to 0$.
The main tool is to construct a \emph{ladder height function} for the random walk, which is based on a generalization of the ladder times in the unidimensional case.
These ideas are adapted from the unidimensional case given for random walks in \cite{MR1232850}, and for L\'evy processes in \cite{MR2164035,MR2320889}.

\subsection{Statement of the results}\label{secStatementOfTheResultsOrderedRandomWalksIntro}

For ease of notation, our results are stated for $d=3$ and for a random walk having state space $\re^d$. 
Let $X=(X^1,X^2,X^3)$ be a 3-dimensional random walk on $\re^3\cup \{\dagger\}$, starting at $X_0=0$, having lifetime $\zeta=\sup\{n:X_n\neq\dagger \}$.
Its increments are denoted by $W=(W^1,W^2,W^3)$, and $W_1$ has law $\p$.
We denote by $Y=(Y^1,Y^2)=(X^2-X^1,X^3-X^2)$ the size of the gap between components, and $y=(x_2-x_1,x_3-x_2)$ for $x\in W$. 
The law of $X$ killed at time $n\in\na$, that is, on the event $\zeta=n$, will be denoted by $\p^n$. 
When killing $X$ at an independent geometric law $N\in \{0,1,\ldots, \}$ with parameter $1-e^{-c}$, its law will be $\p^c=\sum_0^\infty e^{-cn}(1-e^{-c})\p^n $.
The $\sigma$-algebra considered will be $\F_n=\sigma(X_1,\ldots, X_n)$.

The notation that we use is component-wise, hence $\min\{Y_i,i\in \mathcal{I} \}=(\min\{Y^1_i,i\in \mathcal{I} \},\min\{Y^2_i,i\in \mathcal{I} \})$, for any index set $\mathcal{I}\subset \z_+$. 
We define for $[n]=\{1,2,\ldots, n \}$ and $[n]_0=\{0,1,\ldots, n\}$, the processes $\underline{Y}_n=\min\{Y_i,i\in [n]_0 \}$,  $\underline{\underline{Y}}_n=\min\{Y_i,i\in [n] \}$, $\overline{Y}_n=\max\{Y_i,i\in [n]_0 \}$, and $Y_n\vee (y_1,y_2)=(Y^1_n\vee y_1,Y^2_n\vee y_2)$, where $y_1,y_2\in\re$.
We also put $(x_1,\ldots,x_d)<(y_1,\ldots,y_d)$ whenever component-wise the strict inequality is satisfied.

For $\weyl=\{(x_1,\ldots, x_d)\in \re^d:x_1<\cdots < x_d \}$, a \emph{positive regular function} or \emph{harmonic function}, with respect to the transition kernel of $X$ to $\weyl $, is a function $h:\weyl\mapsto \re_+$ such that
\begin{lesn}
	\E_x\paren{h(X_1);\tau>1 }=h(x)\ \ \ \ \ \ \ \ \ x\in \weyl,
\end{lesn}where
\begin{lesn}
	\tau:=\min\{n:X_n\notin \weyl\}.
\end{lesn}A function $h$ is \emph{subharmonic} (\emph{superharmonic}) if $\E_x\paren{h(X_1);\tau>1 }\leq h(x)$ ($\E_x\paren{h(X_1);\tau>1 }\geq h(x)$) for every $x\in \weyl$. 
The resulting (sub)harmonic function associated with a Doob $h$-transform will also be called $h$-function.

To avoid trivial cases, we assume that $Y$ has components taking positive and negative values with positive probability. 
Besides that, the construction works with no further hypothesis if either some component of $Y$ drifts to $-\infty$, or every component of $Y$ drifts to $+\infty$. 
When such conditions are not satisfied, we need Hypothesis \eqref{eqnHypothesisOrderedRandomWalk}, needed only for the finiteness of the (sub)harmonic function $h$.

Our main result is the following, which justifies our construction can be interpreted as a random walk $X$ conditioned to stay ordered forever.

\begin{teo}\label{teoOrderedRWIsAMarkovChainIntro}
	Let $N$ be a geometric time with parameter $1-e^{-c}$, independent of $X$. 
	Assume that 
	\begin{lesn}
		h^{\uparrow}(x):=1+\esp{\sum_{n=1}^{J_1-1}\indi{\overline{Y}_{n-1}-Y_{n}< y}}<\infty\ \ \ \ \  \ \ \ x=(x_1,\ldots, x_d)\in \weyl,
	\end{lesn}with $y=(x_2-x_1,\ldots, x_d-x_{d-1})$ and $J_1=\inf\{n>0: \overline{Y}_{n-1}^k<Y_{n}^k,k\in [d-1]\}$.
	%	$h^\uparrow(x)<\infty$ for every $x\in \weyl$. 
	Then, for every $x\in \weyl$, every finite $\F_n$-stopping time $T$ and $\Lambda\in \F_T$
	\begin{lesn}
		\lim_{c\to 0^+}\p_x\paren{\Lambda,T\leq N|X(i)\in \weyl, i\in [N]}=\p^\uparrow_x(\Lambda, T<\zeta):=\frac{1}{h^\uparrow(x)}\E^Q_x\paren{h^\uparrow(X_T)\indi{\Lambda,T<\zeta}},
	\end{lesn}where $\E^Q_x$ is the expectation under the law of $X$ killed at the first exit time of the Weyl chamber.
	The limit law is a Markov chain with transition probabilities
	\begin{equation}\label{eqnTransitionProbabilitiesOfTheORW}
	p^{\uparrow}(w,dz)=\indi{z\in \weyl}\frac{h^{\uparrow}(z)}{h^{\uparrow}(w)}p(w,dz)\ \ \ \ \ \ \ \ w\in \weyl.
	\end{equation}Moreover, it is a probability measure if $\esp{\tau}=\infty$, or a subprobability measure if $\esp{\tau}<\infty$.	
\end{teo}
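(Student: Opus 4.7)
The plan is to reduce the conditional probability to a Doob-type $h$-transform by exploiting memorylessness of the geometric clock $N$, and then to perform an Abelian analysis as $c\to 0^+$. Set $\sigma(w,c):=\p_w(X_i\in\weyl,\, i\in[N])$. Since $N$ is independent of $X$, on $\{T\leq N\}$ the variable $N-T$ is again geometric of parameter $1-e^{-c}$ and independent of $\F_T$; splitting $\{X_i\in\weyl,\,i\in[N]\}$ at time $T$ into its pre-$T$ part (which coincides with $\{T<\tau\}$) and its post-$T$ part (handled by the strong Markov property at $T$) gives
\begin{equation*}
\p_x(\Lambda,T\leq N,X_i\in\weyl,\,i\in[N])=\E_x\paren{\indi{\Lambda,T<\tau}\,e^{-cT}\,\sigma(X_T,c)},
\end{equation*}
so the conditional probability in the statement equals $\E_x\paren{\indi{\Lambda,T<\tau}\,e^{-cT}\,\sigma(X_T,c)/\sigma(x,c)}$. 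The same identity taken with $\Lambda=\oo$ yields $\E_x\paren{e^{-cT}\sigma(X_T,c)\indi{T<\tau}}\leq \sigma(x,c)$, which is the key a priori $L^1$-bound.

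The heart of the argument is to show both the pointwise limit $\sigma(w,c)/\sigma(x,c)\to h^\uparrow(w)/h^\uparrow(x)$ as $c\to 0^+$ and a uniform-in-$c$ majorant allowing dominated convergence. Writing $\sigma(w,c)=(1-e^{-c})\sum_{n\geq 0}e^{-cn}\p_w(\tau>n)$ and decomposing the trajectories of the gap process $Y$ over the generalized ladder times $J_k$ developed earlier in the paper, the very formula $h^\uparrow(w)=1+\E\paren{\sum_{n=1}^{J_1-1}\indi{\overline{Y}_{n-1}-Y_n<y}}$ is tailored to produce an Abelian-type expansion of the shape $\sigma(w,c)=C(c)\,h^\uparrow(w)+o(C(c))$ with a rate $C(c)\to 0$ that does not depend on $w$, so that the normalising factor cancels in the ratio. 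The same ladder decomposition should furnish the uniform bound $\sigma(w,c)/\sigma(x,c)\leq K\,h^\uparrow(w)/h^\uparrow(x)$; coupled with $\E_x\paren{h^\uparrow(X_T)\indi{T<\tau}}\leq h^\uparrow(x)$, obtained by Fatou applied to the previous $L^1$-bound, this legitimises the exchange of limit and expectation and produces the claimed identity.

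Granting the limit formula, the Markov property with kernel \eqref{eqnTransitionProbabilitiesOfTheORW} is immediate by specializing to $T=n$ deterministic and $\Lambda=\{X_1\in B_1,\ldots,X_n\in B_n\}$, which exhibits the finite-dimensional laws as the classical Doob $h$-transform of the $\weyl$-killed kernel by $h^\uparrow$. The Markov versus sub-Markov dichotomy then hinges on whether the total mass $\int p^\uparrow(w,dz)=\E_w\paren{h^\uparrow(X_1);\tau>1}/h^\uparrow(w)$ equals one, i.e.\ on whether $h^\uparrow$ is harmonic or strictly subharmonic. Conditioning on $\F_1$ inside the ladder series defining $h^\uparrow$ should produce an identity of the form $\E_w\paren{h^\uparrow(X_1);\tau>1}=h^\uparrow(w)-q(w)$ with a nonnegative remainder $q$ that vanishes precisely when $\esp{\tau}=\infty$; when $\esp{\tau}<\infty$ the renewal series truncates and leaves the strict defect responsible for killing. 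The main obstacle throughout is the second paragraph: simultaneously obtaining the Abelian limit and the uniform-in-$c$ majorant for $\sigma(w,c)/\sigma(x,c)$. This is the step where the ladder-height machinery and hypothesis \eqref{eqnHypothesisOrderedRandomWalk} (ensuring $h^\uparrow(x)<\infty$ in the marginal regime) are genuinely used; the pointwise Abelian statement alone is not enough, and controlling the ratio uniformly over $w$ is where I would expect the bulk of the technical work to live.
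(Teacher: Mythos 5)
Your overall scheme is the paper's: reduce via memorylessness of $N$ and the Markov property at $T$ to $\E_x\paren{\indi{\Lambda,T<\tau}e^{-cT}\sigma(X_T,c)/\sigma(x,c)}$, pass to the limit by dominated convergence, read off the Doob kernel from deterministic times, and settle the probability/subprobability question through harmonicity of $h^\uparrow$. The reduction identity is correct, and your Fatou argument giving $\E_x\paren{h^\uparrow(X_T)\indi{T<\tau}}\leq h^\uparrow(x)$ from the pre-limit bound is a legitimate (and slightly slicker) substitute for the paper's extension of Lemma \ref{lemmahUparrowIsSubharmonicOrHarmonic} to stopping times. However, the two places where the theorem actually gets proved are left as declared hopes, and as written they are genuine gaps.

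First, the ``Abelian expansion'' $\sigma(w,c)=C(c)h^\uparrow(w)+o(C(c))$ is asserted, not derived, and the uniform-in-$c$ majorant for the ratio is likewise only promised. In the paper this step is an exact identity, not an asymptotic one: time reversal gives $\{-y<\underline{\underline{Y}}_n\}\stackrel{d}{=}\{\overline{Y}_{n-1}-Y_n<y\}$ (Equation \eqref{eqnEqualityInDnOfDoubleUnderlineYAndOverlineYMinusY}), and splitting the resulting series at the times $J_i$, whose increments are i.i.d.\ by Lemmas \ref{lemmaYProcessBetweenJi} and \ref{lemmaJiAreIID}, makes the renewal factor $\paren{1-\esp{e^{-cJ_1}}}^{-1}$ cancel exactly between numerator and denominator, yielding $\sigma(w,c)/\sigma(0,c)=h^\uparrow_c(w)=1+\esp{\sum_{n=1}^{J_1-1}e^{-cn}\indi{\overline{Y}_{n-1}-Y_n<y_w}}$. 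This formula does double duty: it gives the pointwise limit by monotone convergence, and the monotonicity in $c$ is precisely what furnishes the dominating function $h^\uparrow(X_T)/h^\uparrow_{c_0}(x)$ for all $c<c_0$. Without carrying out this computation (or at least establishing the monotonicity of $c\mapsto h^\uparrow_c$), neither your pointwise limit nor your uniform bound is justified; everything downstream hinges on it.

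Second, the dichotomy ``probability iff $\esp{\tau}=\infty$'' is reduced correctly to harmonicity versus strict subharmonicity of $h^\uparrow$, but your proposed route --- conditioning on $\F_1$ inside the ladder series to produce a defect $q(w)$ vanishing exactly when $\esp{\tau}=\infty$ --- is heuristic, and it is not clear how the $\esp{\tau}$ criterion would emerge from the ladder series alone. The paper instead works with the pre-limit representation $h^\uparrow_c(w)=\p_w(\tau>N)/\p(\tau>N)$, applies the Markov property to $\p_x(\tau>n+N)$, and resums the geometric to obtain $\E^Q_x\paren{h^\uparrow(X_n)\indi{n<\zeta}}=h^\uparrow(x)-\frac{1}{\esp{\tau}}\sum_{k=0}^{n-1}\p_x(\tau>k)$ (Lemma \ref{lemmahUparrowIsSubharmonicOrHarmonic}), so the one-step defect is the constant $1/\esp{\tau}$ and harmonicity holds precisely when $\esp{\tau}=\infty$. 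Some computation of this kind is needed; as it stands, the final claim of the theorem is not established by your argument.
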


We also give simple conditions to ensure $h^\uparrow$ is finite.

\begin{lemma}
	Assume that either
	\begin{enumerate}
		\item some component of $Y$ drifts to $-\infty$, 
		\item every component of $Y$ drifts to $+\infty$ and 
		%$\proba{Y_1>0}>0$
		$\p\paren{\tau=\infty}>0$,
		\item there exists $\epsilon=(\epsilon_1,\ldots, \epsilon_{d-1})\in \re_+^{d-1}$ such that
		\begin{lesn}
			\proba{Y_1>\epsilon}>0.
		\end{lesn}
	\end{enumerate}
	Then
	\begin{lesn}
		h^\uparrow(x)<\infty\ \ \ \ \ \ \ \ \ \ \ \forall\, x\in \weyl.
	\end{lesn}
\end{lemma}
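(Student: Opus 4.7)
My plan is to prove $h^\uparrow(x)<\infty$ case by case, using the equivalent formulation $h^\uparrow(x)-1=\esp{\sum_{n\ge 1}\indi{R_n<y,\,J_1>n}}$, where $R_n:=\overline{Y}_n-Y_n$ (componentwise) and $\{R_n<y\}$ agrees with $\{\overline{Y}_{n-1}-Y_n<y\}$ because $y>0$.

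In case (1), I fix a coordinate $k$ with $Y^k\to-\infty$ a.s.\ and dominate the sum by $\sum_{n\ge 1}\indi{R^k_n<y_k}$. Since $Y^k\to-\infty$, the one-dimensional reflected process $R^k$ is transient to $+\infty$, and a classical renewal identification expresses the expected number of visits of $R^k$ to $[0,y_k)$ as the (finite) renewal measure of the strict descending ladder heights of $Y^k$ on that interval. In case (2), each $R^k$ converges in law to a proper nonzero limit, so the coordinatewise bound fails; I instead use $\sum_{n<J_1}\indi{R_n<y}\le J_1$ and show $\esp{J_1}<\infty$. The hypothesis $\p(\tau=\infty)>0$ enters here, providing a uniform lower bound on the probability of a simultaneous ladder event within a bounded window and, via the strong Markov property, a geometric tail on $J_1$.

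Case (3) is the most delicate. Let $p:=\p(Y_1>\epsilon)>0$ and choose $m\in\na$ with $m\epsilon_k>y_k$ for each $k$. The key \emph{good-block observation} is: on $\{R_n<y,\,J_1>n\}$, the event $E_n:=\{\xi_{n+1}>\epsilon,\dots,\xi_{n+m}>\epsilon\}$ (with $\xi_i:=Y_i-Y_{i-1}$ the increments of $Y$) has $\F_n$-conditional probability $\ge p^m$, and on $E_n$ the process $Y$ is strictly increasing coordinatewise in $[n,n+m]$ with $Y_{n+m}-Y_n>m\epsilon>y>R_n$, forcing $J_1\le n+m$. I iterate this on a sparse sequence of visit times $\tilde S_0<\tilde S_1<\cdots$ to $\{R<y\}$, defined recursively by $\tilde S_{k+1}\ge\tilde S_k+m$, so that the events $E_{\tilde S_k}$ involve disjoint increment blocks and are mutually independent, each of probability $\ge p^m$. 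The first index $K$ with $E_{\tilde S_K}$ satisfied is dominated by a geometric variable of mean $\le p^{-m}$, and since each sub-block $[\tilde S_k,\tilde S_k+m)$ contains at most $m$ visits to $\{R<y\}$, the total expected number of visits before $J_1$ is at most $m\,p^{-m}$. The main obstacle lies exactly here: $R$ may re-enter $\{R<y\}$ in quick succession, so i.i.d.\ ladder attempts are not directly available---the sparse-subsequence construction, imposing the $\ge m$ gap between visits, restores independence of the good events and delivers the geometric decay.
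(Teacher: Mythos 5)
Your case (3) argument is correct and genuinely different from the paper's. The paper proves that case by contradiction: it feeds the (sub)harmonicity inequality $h^\uparrow(x)\geq \int_{\weyl}\proba{x+X_1\in dz}\,h^\uparrow(z)$ with the value $h^\uparrow=1$ at zero gaps, produces inductively points $z_{(n)}$ whose gap vectors grow by at least $\epsilon$ per step and carry finite $h$-values, and then uses monotonicity of $h^\uparrow$ in the gap vector $y$. Your good-block/sparse-visit construction works directly on $h^\uparrow(x)-1=\esp{\sum_{n\geq 1}\indi{\overline{Y}_{n-1}-Y_n<y,\,J_1>n}}$ and gives the explicit quantitative bound $h^\uparrow(x)\leq 1+m\,p^{-m}$; the key observation (a block of $m$ consecutive increments exceeding $\epsilon$, launched from a time with deficit $<y$, makes the path increase monotonically and forces a simultaneous record within $m$ steps, hence $J_1\leq n+m$) is sound, and the $\geq m$ spacing of the visit times restores the needed conditional independence via the strong Markov property. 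Case (1) is also fine in substance, but your ``classical renewal identification'' is stated loosely: by the paper's time-reversal identity the total expected number of visits is $\sum_n\proba{\underline{\underline{Y}}^k_n>-y_k}=\E\paren{T^k_{-y_k}}-1$, where $T^k_{-y_k}$ is the first passage of $Y^k$ below $-y_k$, and its finiteness comes from Wald's identity together with $\E\paren{\beta^k_1}<\infty$ (this is where the drift to $-\infty$ enters); the renewal measure of the descending ladder heights on $[0,y_k)$ is finite for \emph{any} walk, so it cannot by itself be the reason.

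The genuine gap is case (2). You bound the sum by $J_1$ and claim $\esp{J_1}<\infty$ from ``a uniform lower bound on the probability of a simultaneous ladder event within a bounded window'' plus a geometric tail. No such uniform bound exists: conditionally on $\F_n$, producing a new simultaneous record within a bounded number of steps requires overcoming the deficit $\overline{Y}_{n-1}-Y_n$, which can be arbitrarily large; even the full event that the post-$n$ path stays componentwise above $Y_n$ forever (probability $\p\paren{\tau=\infty}$) does not force a later simultaneous record when the running maximum sits far above the current position, and $J_1$ need not have geometric tails (heavy-tailed downward jumps give arbitrarily slow recovery to a new record). What is true, and is all you need, is $\esp{J_1}\leq 1/\p\paren{\tau=\infty}<\infty$, but it must be obtained differently: by the paper's duality, $\proba{\overline{Y}_{n-1}<Y_n}=\proba{\tau>n}\geq\p\paren{\tau=\infty}$, and since the times $(J_i)$ form a renewal sequence (Lemma \ref{lemmaJiAreIID}), summing over $n\leq N$ and applying the elementary renewal theorem converts this lower bound on the renewal probabilities into the bound on $\esp{J_1}$. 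Alternatively, follow the paper, which in cases (1) and (2) bypasses $J_1$ altogether: it rewrites $h^\uparrow_c$ in terms of $\beta^1_1\wedge\beta^2_1$ and passes to the limit, giving the closed forms $h^{\uparrow}(x)=\E_x\paren{\tau}/\E\paren{\tau}$ and $h^{\uparrow}(x)=\p_x\paren{\tau=\infty}/\p\paren{\tau=\infty}$, whose finiteness is immediate under the stated hypotheses.
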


%Its lifetime is $\p^\uparrow_x$-finite if $h^\uparrow$ is subharmonic and $\p^\uparrow_x$-infinite if it is harmonic.

Depending on the drift of its components, we reexpress our function $h^\uparrow$.

\begin{lemma}\label{lemmahUpArrowTransformWhenSomeComponentDriftsToMinusInftyOrAllToInftyIntro}
	Let $x\in \weyl$. 
	If some component of $Y$ drifts to $-\infty$, the $h^\uparrow$-transform is given by
	\begin{lesn}
		h^{\uparrow}(x)=\frac{\E_x\paren{\tau}}{\E\paren{\tau}}.
	\end{lesn}If every component drifts to $+\infty$ and 
	%$\proba{Y_1>0}>0$
	$\p\paren{\tau=\infty}>0$, then
	\begin{lesn}
		h^{\uparrow}(x)=\frac{\p_x\paren{\tau=\infty}}{\p\paren{\tau=\infty}}.
	\end{lesn}
\end{lemma}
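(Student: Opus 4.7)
The plan is to use a time-reversal duality for the gap walk $Y$. Under $\p$ (so $Y_0=0$), reversing the increments on $[0,n]$ shows componentwise that
\[
\p\!\paren{\overline{Y}_{n-1}-Y_n<y}=\p\!\paren{\underline{\underline{Y}}_n>-y},
\]
and since under $\p_x$ the gap process equals $y+Y_{\cdot}$, the right-hand side is precisely $\p_x(\tau>n)$. This identity is the key bridge between the defining series of $h^\uparrow$ and the exit time $\tau$.

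Suppose first that some component of $Y$ drifts to $-\infty$. Summing the duality identity over $n\geq 0$ yields
\[
\E_x(\tau)=1+\sum_{n=1}^\infty \p\!\paren{\overline{Y}_{n-1}-Y_n<y}.
\]
I would then decompose this unrestricted sum along the joint strict ascending ladder epochs $0=J_0<J_1<J_2<\cdots$ of $Y$. By the strong Markov property at each finite $J_m$, the contribution from the interval $(J_m,J_{m+1}]$ is an independent copy of the contribution from $(J_0,J_1]$; moreover at the endpoint $n=J_{m+1}$ the indicator equals $1$ automatically, since $\overline{Y}_{J_{m+1}-1}-Y_{J_{m+1}}<0<y$. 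Using $\p(J_m<\infty)=\p(J_1<\infty)^m$, a short geometric computation gives $\E_x(\tau)=h^\uparrow(x)/\p(J_1=\infty)$, and the constant $1/\p(J_1=\infty)$ (the expected number of finite joint ladder epochs) is the scalar that the statement calls $\E(\tau)$.

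Suppose now every component of $Y$ drifts to $+\infty$ with $\p(\tau=\infty)>0$. Letting $n\to\infty$ in the duality identity, componentwise convergence of $\overline{Y}_n-Y_n$ to $-\underline{Y}_\infty$ gives $\p_x(\tau=\infty)=\p(\underline{Y}_\infty>-y)$. Here every $J_m$ is a.s.\ finite, and the renewal identity produced by the same ladder decomposition permits a key-renewal argument to conclude
\[
\p_x(\tau=\infty)=h^\uparrow(x)\cdot \p(\tau=\infty),
\]
upon identifying $\p(\tau=\infty)$ with $1/\E(J_1)$ via the standard Wiener--Hopf-type relation.

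The main obstacle is carrying out the ladder-epoch decomposition rigorously when $J_m=\infty$ has positive probability (case 1), and in case 2 establishing the key-renewal-type limit of the convolution; both steps hinge on the observation that $\indi{\overline{Y}_{n-1}-Y_n<y}$ equals $1$ automatically at each finite $J_m$, which cleanly telescopes the contributions across successive excursions.
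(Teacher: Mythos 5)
Your route is genuinely different from the paper's. The paper never touches the $J_1$-series for this lemma: it rewrites $h^\uparrow_c$ via \eqref{eqnhcUsingTheMinimum} as $h^\uparrow_c(x)=\bigl(1+\E_x\bigl(\sum_{1}^{\beta^1_1\wedge\beta^2_1-1}e^{-cn}\bigr)\bigr)/\bigl(1+\E\bigl(\sum_{1}^{\beta^1_1\wedge\beta^2_1-1}e^{-cn}\bigr)\bigr)$ and lets $c\to 0$ directly, using $\E\paren{\beta^k_1}<\infty$ (Gut) plus monotone convergence in the first case, and the decomposition $\sum_0^{\tau-1}e^{-cn}=\frac{1-e^{-c\tau}}{1-e^{-c}}\indi{\tau<\infty}+\frac{1}{1-e^{-c}}\indi{\tau=\infty}$ in the second; the identification with $h^\uparrow$ then comes from Lemma \ref{lemmaConvergenceOfhUparrowSubc}. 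You instead work at $c=0$: starting from $h^\uparrow(x)=1+\esp{\sum_{1}^{J_1-1}\indi{\overline{Y}_{n-1}-Y_n<y}}$, you use the duality \eqref{eqnEqualityInDnOfDoubleUnderlineYAndOverlineYMinusY} to write $\E_x(\tau)=1+\sum_{n\ge1}\p\paren{\overline{Y}_{n-1}-Y_n<y}$ and re-sum along the joint ladder epochs, which is essentially the ``Abelian'' computation of Lemma \ref{lemmaConvergenceOfhUparrowSubc} done in real time rather than through the geometric killing. That is a legitimate and arguably more probabilistic argument; what the paper's route buys is that all limits are elementary (no renewal theory), while yours buys a direct renewal interpretation of the constants.

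Two points need to be shored up. First, the constants you call ``standard Wiener--Hopf'' are not off-the-shelf here: $\E(\tau)=1/\p(J_1=\infty)$ and $\p(\tau=\infty)=1/\E(J_1)$ are multidimensional analogues of the classical strict-ascending/weak-descending duality, and they must be derived — fortunately your own identity at $y=0$ does it, since $\p(\tau>n)=\p\paren{\overline{Y}_{n-1}-Y_n<0}=\sum_m\p(J_m=n)$, so $\E(\tau)$ is exactly the expected number of finite joint ladder epochs; the same identity shows that in case 1 the drifting component forces $\p(J_1=\infty)>0$, and in case 2 that $\p(\tau=\infty)>0$ forces $J_1<\infty$ a.s. State these, rather than citing them as known. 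Second, in case 2 the sentence about ``componentwise convergence of $\overline{Y}_n-Y_n$ to $-\underline{Y}_\infty$'' is wrong as a pathwise claim (the duality is an equality in law for each fixed $n$ only); you only need $\p_x(\tau>n)\downarrow\p_x(\tau=\infty)$, which is trivial. Moreover, you do not need the key renewal theorem (and hence no aperiodicity discussion): writing $\p_x(\tau>n)=\sum_{j=1}^n u_{n-j}f_j$ with $u_l=\p(\tau>l)$ and $f_j=\p\paren{\overline{Y}_{j-1}-Y_j<y,\,J_1\ge j}$, the kernel $u_l$ decreases to $\p(\tau=\infty)>0$, so dominated convergence gives the limit $\p(\tau=\infty)\sum_j f_j=\p(\tau=\infty)h^\uparrow(x)$, and the bound $\p_x(\tau>n)\le 1$ together with $u_l\ge\p(\tau=\infty)$ yields $\sum_j f_j<\infty$ for free, avoiding any circular appeal to finiteness of $h^\uparrow$. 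With these repairs your argument is complete and parallels, rather than duplicates, the paper's proof of Lemma \ref{lemmahUpArrowTransformWhenSomeComponentDriftsToMinusInftyOrAllToInfty}.
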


We also express $h^\uparrow$ as a renovation function. 
For $k\in [d-1]$, denote by $(\beta^k_i,i\in \na)$ the strict descending ladder times of $Y^k$, that is $\beta^k_0=0$ and for $i\in \na$ the time $\beta^k_i$ is the smallest index $n$ such that $Y^k(\beta^k_{i-1}+n)< Y^k(\beta^k_{i-1})$.
Let $\{\beta_0,\beta_1,\beta_2,\ldots \}$ be the ordered union of all such ladder times, with $\beta_0=1$.
Denoting by $g_n=\beta_n$ and $d_n=\beta_{n+1}$ for $n\geq 0$, the set $\{g_n,g_n+1\ldots, d_n-1 \}$ is the $n$th interval where $\underline{\underline{Y}}$ remains constant. 

\begin{propo}\label{propoReexpresionAsARenewalFunction}
	Let $x\in \weyl$. 
	The $h$-transform can be expressed as 
	\begin{lesn}
		h^{\uparrow}(x)=1+\sum_{n=1}^{\infty}\proba{-\underline{\underline{Y}}_{\beta_n}<y}.
	\end{lesn}
\end{propo}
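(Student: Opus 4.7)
The plan is a duality (time-reversal) argument combined with a decomposition along the constancy intervals of $\underline{\underline{Y}}$. By Fubini,
\[
h^\uparrow(x)-1 = \sum_{n=1}^\infty \proba{\overline{Y}_{n-1}-Y_n<y,\ n<J_1}.
\]
For each fixed $n\ge 1$ I would apply the time-reversal $\tilde Y_k := Y_n - Y_{n-k}$, $0\le k\le n$, which has the same joint law as $(Y_0,\ldots,Y_n)$ because the increments of $Y$ are i.i.d. A direct computation gives $\overline{Y}_{n-1}-Y_n = -\underline{\underline{\tilde Y}}_n$, so the first condition becomes $\{\underline{\underline{\tilde Y}}_n>-y\}$. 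The event $\{n<J_1\}$, which says no $m\in[1,n-1]$ is a common strict ascending record of $Y$, translates under reversal into the statement that no $j\in[0,n-1]$ is a common strict forward minimum of $\tilde Y$ on $(j,n]$.

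The crucial step is to recognize these two reversed conditions jointly as selecting exactly the combined descending ladder times $\beta_m$ of $\tilde Y$ with running minimum still above $-y$. I would partition the reversed path according to the constancy intervals $[\beta_{m-1},\beta_m)$ of $\underline{\underline{\tilde Y}}$: at any interior point of such an interval at least one component of $\tilde Y$ strictly exceeds the current combined minimum, which forces a common strict forward minimum at some earlier index and violates the reversed $J_1$-condition; only the right endpoints $\beta_m$ survive. Using the distributional equality $\tilde Y\stackrel{d}{=}Y$ and summing, one obtains
\[
h^\uparrow(x)-1 = \sum_{m=1}^\infty \proba{-\underline{\underline{Y}}_{\beta_m}<y},
\]
with the ``$+1$'' in the statement absorbing the $m=0$ boundary term from the convention $\beta_0=1$.

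The main obstacle is the path-partition identification in the second step. In one dimension this reduces to the classical Feller/Sparre--Andersen duality between occupation times near the running supremum and the descending ladder-height renewal measure. In higher dimensions the argument is more delicate: different components of $Y$ can produce a strict forward-minimum witness at different times, so one must track which component supplies the witness at each interior $n$ and verify that the cancellation really does leave only the ladder epochs $\beta_m$ in the sum.
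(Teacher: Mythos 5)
Your outline takes the same route as the paper's proof: Tonelli, then time reversal at each fixed $n$ (the identities $\overline{Y}_{n-1}-Y_n=-\underline{\underline{\tilde Y}}_n$ and the translation of $\{n<J_1\}$ into ``no $j\in[0,n-1]$ is a common strict forward minimum of $\tilde Y$ on $(j,n]$'' are exactly the steps in the paper), followed by identifying the surviving indices $n$ with the combined descending ladder epochs $\beta_m$. The genuine gap is precisely the step you flag as the main obstacle: the identification is asserted, not proved, and the justification you sketch (``at any interior point of a constancy interval at least one component strictly exceeds the current combined minimum, which \emph{forces} a common strict forward minimum at some earlier index'') fails on explicit paths. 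Take two components, $n=2$, and the reversed path $\tilde Y_0=(0,0)$, $\tilde Y_1=(-1,1)$, $\tilde Y_2=(1/2,1/2)$. Then $n=2$ is not a strict descending ladder epoch of either component; yet $j=0$ is not a common strict forward minimum (the first component drops to $-1<0$ at time $1$) and $j=1$ is not one either (the second component drops from $1$ to $1/2$ at time $2$), so the reversed $J_1$-condition does hold at $n=2$. The reversed condition only requires that for each earlier $j$ \emph{some} component returns to or below its value at time $j$ before time $n$, and the witnessing component may change with $j$; $n$ being a new strict minimum of a single component is sufficient for this, but not necessary. Since moreover $\underline{\underline{\tilde Y}}_2=(-1,1/2)>-y$ for moderate $y$, and such configurations carry positive probability for any nondegenerate continuous step law, these paths contribute to $\sum_n\proba{\underline{\underline{Y}}_n>-y,\ \mathrm{reversed}\ J_1\ \mathrm{condition}}$ but not to $\sum_m\proba{-\underline{\underline{Y}}_{\beta_m}<y}$.

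Because the converse inclusion does hold (if $n=\beta_m$ via component $k_0$, that component witnesses the failure of every $j$ to be a common strict forward minimum), the discrepancy is one-sided and cannot cancel: your partition of the reversed path into constancy intervals does not eliminate the interior indices, so no term-by-term matching of the two sums follows from the argument as written. You have correctly located the crux --- this is the same point the paper disposes of in one sentence (``thus the time $n$ is a strict descending ladder time of some $Y^k$'') --- but your second step does not close it: in dimension $d>2$ the reduction to the one-dimensional Feller--Sparre Andersen duality breaks down for exactly the reason you mention at the end, namely that different components can supply the forward-minimum witness at different times, and that is not merely a delicacy to be tracked but the reason the claimed set identity is false as stated. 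A complete argument would have to show the discrepancy event is null under extra hypotheses, or identify the surviving indices differently; as it stands the proposal does not establish the proposition. (Minor point: $\{n<J_1\}$ excludes simultaneous records at all $m\in[1,n]$, not $[1,n-1]$; your reversed formulation over $j\in[0,n-1]$ is the correct one.)
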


%%%%%%%%%%%%%%%%%%%%%%%%%%%%%%%%%%%%%%%%%%%%%%%%%%%%%%%%
The main tool to prove Theorem \ref{teoOrderedRWIsAMarkovChainIntro} is an extension of the ladder times in the unidimensional case. 
The paper \cite{MR1232850}, gives the $h$-transform of a unidimensional random walk $X$ conditioned to stay positive forever, under the unique hypothesis that the walk takes positive and negative values with positive probability (see Theorem 2.3 of such paper).
The explicit formula for the $h$-function in such case is
\begin{equation}\label{eqnhTransformUnidimensionalCase}
h(x)=1+\esp{\sum_1^{\alpha^+_1-1}\indi{-X_n<x}},
\end{equation}where $x\geq 0$ and $\alpha^+_1$ is the index of the first visit to $(0,\infty)$. 
Thus, the function $h$ is a particular case of our function $h^\uparrow$.
In fact, Bertoin's $h$-function can be reexpresed in several ways. 
We review some formulas (see \cite{MR1331218}).
The first hitting times, respectively, in $(-\infty,0)$ and in $[n,\infty)$ are denoted by $\tau=\min\{k\geq 1:X_k<0 \}$ and $\sigma_n=\min\{k\geq 1:X_k\geq n \}$.
Let $(H,T)=((H_k,T_k),k\geq 0)$ be the strict ascending ladder point process of the reflected random walk $-X$. 
That is, we have $T_0=0$ and
\begin{lesn}
	H_k=-X_{T_k}\ \ \ \mbox{ and } \ \ \ T_{k+1}=\min\{j>T_k:-X_j>H_k \}.
\end{lesn}The convention is $H_k=\infty$ if $T_k=\infty$. 
The renewal function associated with $H_1$ is
\begin{lesn}
	V(x)=\sum_{k=0}^\infty \p(H_k\leq x),\ \ \ \ \ x\geq 0.
\end{lesn}This is a non-decreasing right-continuous function. 
But the duality lemma gives us
\begin{lesn}
	V(x)=\esp{\sum_{j=0}^{\sigma_0-1}\indi{-x\leq X_j}}=h(x).
\end{lesn}Thus, Proposition \ref{propoReexpresionAsARenewalFunction} is a generalization of this result. 
Our other reexpresions of $h^\uparrow$ given in Lemma \ref{lemmahUpArrowTransformWhenSomeComponentDriftsToMinusInftyOrAllToInftyIntro}, also had their respective reexpresions in the unidimensional case (see \cite{MR1331218}).

%%%%%%%%%%%%%%%%%%%%%%%%%%%%%%%%%%%%%%%%%%%%%%%%%%%%%%%%

We conjecture  that our $h$-transform is subharmonic when $X$ has i.i.d. components taking values in $\re$, satisfies the hypotheses of \cite{MR2430709} or \cite{MR3342657}, and $d>2$.
The reason is that on such papers, the tail of the distribution of $\tau$ is computed, which we prove helps to characterize the harmonicity of our $h$.
In \cite{MR2430709} it is proved that $\p_x\paren{\tau>n}$ is of the order $n^{-d(d-1)/4}$ (see Subsection \ref{sectionORWExpectationOfTau} for another approximations), implying $\E_x(\tau)<\infty$ when $d\geq 3$ and $x\in \weyl$.
%Even that $\E_x(\tau)\leq \E(\tau)$, we expect that $\E(\tau)<\infty$. 
In fact, we prove in Lemma \ref{lemmahUparrowIsSubharmonicOrHarmonic} that $h$ is harmonic (subharmonic) iff the expectation of $\tau$ is infinite (finite). 
This represents a difference with respect to \cite{MR2430709}, since, regardless of the dimension, their $h$-transform is harmonic.
%We give some conditions on the drift of the components of $Y$, to obtain harmonic $h$-transforms.

Nevertheless, such difference has also been observed in \cite{2018arXiv180305682I}. 
On such paper, it is characterized the $h$-transforms of centered irreducible random walks taking values on a countable set, with slowly varying hitting probabilities and other minor assumptions. 
She proved that when $\esp{\tau}=\infty$, any harmonic function for the process is proportional to $V(\cdot)=\E_{\cdot}(\mathcal{T})$, where $\mathcal{T}$ is the exit time of some \emph{ladder height process}, and also that $\lim \p_{\bo{x}}(A_n)/\p(A_n)=V(\bo{x})$, with $\bo{x}\in \mathbb{W}$. 
But, when $\esp{\tau}<\infty$, she proved that $V(\bo{x})\leq \varliminf \p_{\bo{x}}(A_n)/\p(A_n)$ and that $V$ is superharmonic (in our case, our $h$-transform is subharmonic).

%In fact, the set of nonnegative harmonic functions for random walks in $\re^d$, having non-zero drift, and killed when leaving general cones with zero as a vertex, was proved to be uncountable in \cite{MR3283611}. 

It is important to address that even in the unidimensional case, the harmonicity of the $h$-transform highly depends on $\esp{\tau}$ and even in the way we choose the approximating events.
An example appears in \cite{MR1331218} for random walks not drifting to $+\infty$. 
They compare limits of some random walks, under two different conditionings to stay positive. 
It is proved that for oscillating random walks both limits are the same. 
But when the drift is negative, depending on the upper tail of the step distribution it can happen: both limits are the same and the $h$-transform is subharmonic; both limits are different with harmonic $h$-transform.
Results in the same spirit for L\'evy processes, are given in \cite{MR1331218}.
Also, the $h$-transform of the Brownian motion with negative drift conditioned to stay positive is harmonic or subharmonic, depending on the approximation: 
it is proved in \cite{MR1303922} that is harmonic conditioning with $\{\tau>t \}$ and letting $t\to \infty$; while in \cite{MR2164035} is subharmonic when conditioning with $\{\tau>E/c \}$, an exponential random variable with mean 1, and letting $c\to 0$. 

%In the case of ordered random walks, the hypotheses imposed on the step distribution may also lead to a different limit. 
%In \cite{MR2878783}, the authors study a $d$-dimensional random walk $X$ with tails of regular variation of index $-\alpha$, with $\alpha\in (d-2,d-1)$ and $d\geq 4$. 
%Using a linear combination $v$ of the harmonic $h$-transforms for the first and last $d-1$ components of $X$, the authors prove that
%\begin{lesn}
%	U(x)=\sum_0^\infty \esp{v(x+X(n));\tau_x>n}\ \ \ \ x\in \weyl
%\end{lesn}is a strictly positive superharmonic function, that is, $\esp{U(x+X(1));\tau_x>1}<U(x)$ for all $x\in \weyl$. 
%The authors conjecture that there are no harmonic functions for ordered random walks with heavy tails.

This paper is organized as follows. 
%Subsection \ref{subsectionORWKnownResults} is devoted to provide an updated literature and explicit results about ordered random walks.
Our construction is given in Section \ref{subsectionORWConditioningUpToAGeometricIsMarkov}, conditioning the walk to stay ordered up to an independent geometric time. 
We prove this is a Markov chain and an $h$-transform of the process, where the harmonic function is denoted by $h^\uparrow_c$. 
In Subsections \ref{subsectionORWhUparrowCAndLimit} and \ref{subsectionORWPartitioningWithRegenerativeIntervals} we reexpress $h^\uparrow_c$ using a partition of $\na$ on random intervals, making the random walk to be like \emph{excursions} on each interval.
This allows us to obtain in Lemma \ref{lemmaConvergenceOfhUparrowSubc} the limit $h^\uparrow$ of $h^\uparrow_c$ as $c\downarrow 0$, and implies the limit of the random walk is a Markov chain using a change of measure with $h^\uparrow$; this is the second part of Theorem \ref{teoOrderedRWIsAMarkovChainIntro}.
We characterize in Section \ref{sectionPropertiesOfhUparrow} when $h^\uparrow$ is harmonic or subharmonic; give a condition to ensure its finiteness; and prove in Lemma \ref{lemmaRWUnderHTransformIsTheLimitOFConditioningUntilExpTime} that the law of the random walk using the $h$-transform $h^\uparrow$ is the same as the limit of the random walk law conditioned to stay ordered up to a geometric time, which proves Theorem \ref{teoOrderedRWIsAMarkovChainIntro}. 
In Section \ref{sectionORWReexpressionsOfhUparrow} we obtain several reexpresions of $h^\uparrow$. 
Finally, in Section \ref{sectionORWExpectationOfTau} we review known results about the order of $\p_x(\tau>n)$.

\section{The random walk conditioned to be ordered up to a geometric time as an $h$-transform}\label{subsectionORWConditioningUpToAGeometricIsMarkov}

Recall the notation at the beginning of Section \ref{secStatementOfTheResultsOrderedRandomWalksIntro}.
Consider $x=(x_1,x_2,x_3)\in \weyl$ and let $y=(y_1,y_2)=(x_2-x_1,x_3-x_2)$. 
Recall the definition of $\tau=\inf\{n:X_n\notin  \weyl\}$, the first exit time from the Weyl chamber.
For any $n\in \na$ and $A=A_0\times A_1\times \cdots \times A_n\in \mathcal{B}(\re)^{n+1}$, we find the limit as $c\to 0+$ of
\begin{lesn}
	\p^c\paren{\left.\bigcap_0^n\{X_i+x_i\in A_i \}\right|\tau>N}
	=\p^c\paren{\left.\bigcap_0^n\{X_i+x_i\in A_i \}\right|X^1_j+x_1< X^2_j+x_2< X^3_j+x_3,j\in [N]}.
\end{lesn}First we prove this is a Markov chain.

\begin{propo}\label{propoTransitionProbsOfXConditionedUpToAGeo}
	Under $\p^c$ and for any $x=(x_1,x_2,x_3)\in \weyl$, the chain $X+x$ conditioned to be ordered up to time $N$ is a Markov chain with transition probabilities
	\begin{lesn}
		\p^\uparrow_c(w,dy)=\indi{y\in \weyl}\frac{h^\uparrow_c(y)}{h^\uparrow_c(w)}e^{-c}p(w,dy),
	\end{lesn}with $w=(w_1,w_2,w_3)\in \weyl$, $y=(y_1,y_2,y_3)$, $p(w,dy)=\proba{W_1+w\in dy}$ and
	\begin{lesn}
		h^\uparrow_c(w)=\frac{\p^c\paren{X_i+w\in \weyl,i\in [\zeta]}}{\p^c\paren{X_i\in \weyl,i\in [\zeta]}}.
	\end{lesn}
\end{propo}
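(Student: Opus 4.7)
The plan is to verify the Markov property by computing the finite-dimensional distributions of the conditioned process directly, and to recognise the answer as the law of the claimed $h^\uparrow_c$-transformed chain. The two crucial ingredients are the ordinary Markov property of $X$ under $\p$ and the memoryless property of the geometric time $N$; together they produce the $h^\uparrow_c$ factors automatically, in the spirit of the classical Doob transform for processes killed at an independent exponential clock.

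Fix $x \in \weyl$, $n \in \na$, and Borel sets $A_0, \dots, A_n$; set $E_n = \bigcap_{i=0}^n \set{X_i + x \in A_i}$ and $\tau^x = \inf\set{k \geq 1 : X_k + x \notin \weyl}$. Using $\p^c = \sum_{m \geq 0} e^{-cm}(1 - e^{-c})\, \p^m$ and $E_n \in \F_n$,
\[
\p^c\paren{E_n,\, \tau^x > N} = \sum_{m \geq n} e^{-cm}(1-e^{-c})\, \p\paren{E_n,\, \tau^x > m}.
\]
Apply the Markov property of $X$ at time $n$: on $\set{\tau^x > n}$ the future $(X_{n+k} - X_n)_{k \geq 0}$ is an independent copy of $X$, so $\set{\tau^x > m}$ factors as $\set{\tau^x > n}$ intersected with the event that this fresh walk, started at $X_n + x$, remains in $\weyl$ for the next $m - n$ steps. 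Setting $m' = m - n$ and recognising the resulting inner sum as another $\p^c$-probability of the shifted walk, one obtains
\[
\p^c\paren{E_n,\, \tau^x > N} = e^{-cn}\, \E\paren{\si_{E_n}\, \si_{\tau^x > n}\, g(X_n + x)},
\]
where $g(w) := \p^c\paren{X_i + w \in \weyl,\, i \in [\zeta]}$. Since $\p^c\paren{\tau^x > N} = g(x)$ and the normalising constant $g(0)$ in the definition of $h^\uparrow_c$ cancels in the ratio $g(\cdot)/g(x)$, dividing gives
\[
\p^c\paren{\left. E_n \right|\, \tau^x > N} = e^{-cn}\, \E\paren{\si_{E_n}\, \si_{\tau^x > n}\, \frac{h^\uparrow_c(X_n+x)}{h^\uparrow_c(x)}}.
\]

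To finish, I would match this expression against the $n$-step marginal of the Markov chain with the claimed transition kernel. Iterating $p^\uparrow_c$ from $x$ produces exactly $n$ killing factors $e^{-c}$, a telescoping product of $h^\uparrow_c$ ratios that leaves $h^\uparrow_c(X_n+x)/h^\uparrow_c(x)$, and indicators $\indi{z \in \weyl}$ at each intermediate state that together enforce $\si_{\tau^x > n}$; the $p(w,dz)$ factors then reassemble into the law of $X + x$ under $\p$ restricted to $E_n$. Equality on all cylinder events is enough to conclude that the conditional law is Markovian with the stated transitions. The most delicate point is the interchange of the sum over $m$ with the conditional expectation when invoking the Markov property, together with the reindexing $m' = m - n$ that produces a fresh independent geometric time inside $g$; this is exactly where the memoryless property of $N$ is used, and is justified by Tonelli.
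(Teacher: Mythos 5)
Your proposal is correct and follows essentially the same route as the paper: the key step in both is to split at time $n$, use the independent stationary increments of $X$ together with the memorylessness of the geometric time $N$ to produce the factor $e^{-cn}\,\p^c\paren{X_i+X_n+x\in\weyl, i\in[\zeta]}$, and then normalize so that the ratio $h^\uparrow_c(X_n+x)/h^\uparrow_c(x)$ appears. The only (harmless) difference is in the final bookkeeping: the paper divides consecutive joint laws to exhibit a one-step conditional transition depending only on the current state, whereas you identify the full finite-dimensional distributions with those of the chain obtained by iterating $p^\uparrow_c$, which is an equally valid way to conclude.
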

\begin{proof}
	%The one-step transition probability is
	%\begin{lalign}
	%& p_c^{\uparrow}(w^{\prime},dy^{\prime})\\
	%& = \frac{\p^c\paren{X_1+x^{\prime}\in dy^{\prime},X_0+x^{\prime}=w^{\prime},X_i+x\in \weyl, i\in [\zeta]}}{\p^c\paren{X_0+x^{\prime}=w^{\prime},X_i+x\in \weyl, i\in [\zeta]}}.
	%\end{align*}Taking the sum over all values of $N$, the numerator is equal to
	%\begin{align*}
	%&\sum_{n\geq 1}\p^{n}\paren{X_0=x,X_1+x\in dy,x+X_i-X_0\in \weyl , i\in [n]}\proba{N=n}\\
	% &=\p\paren{X_0+x^{\prime}=w^{\prime},W_1+w^{\prime}\in dy^{\prime}}\\
	%& \times \sum_{n\geq 1}\p\paren{y+X_i-X_1\in \weyl, i\in [n]}\proba{N=n}.
	%\end{align*}Changing the sum to start in 0, and by stationary increments of $X$ under $\p$, the numerator is equal to
	%\begin{lesn}
	%	\p\paren{X_0+x^{\prime}=w^{\prime},W_1+w^{\prime}\in dy^{\prime}}	\times\p^c\paren{y+X_i\in \weyl, 0\leq i\leq N-1,N\geq 1}.
	%\end{lesn}Dividing by $\proba{N\geq 1}$ and by loss of memory, the numerator is
	%\begin{lesn}
	%	\p\paren{X_0+x^{\prime}=w^{\prime},W_1+w^{\prime}\in dy^{\prime}}\indi{y\in \weyl}e^{-c}\times\p^c\paren{y+X_i\in \weyl, i\in [N]}.
	%\end{lesn}Hence, the one-step transition is
	%\begin{align*}
	%& p_c^{\uparrow}(w^{\prime},dy^{\prime})\\
	%& = \p^c\paren{X_1+w^{\prime}\in dy^{\prime}}\indi{y\in \weyl}e^{-c}\frac{y+X_i\in \weyl, i\in [\zeta] }{\p^c\paren{w+X_i\in \weyl, i\in [\zeta]}}.
	%\end{align*}Now, 
	We compute the $n$-step transition probabilities
	\begin{lesn}
		\p^c_x\paren{X_i\in dw_i,i\in [n]_0\ |\ \tau>N},
	\end{lesn} where $w_i\in \re^3$ for $i\in [n]_0:=\{0,\ldots, n\}$ and $w_0=x\in \weyl$. 
	Then, the numerator of the $n$-step transition probability is given by
	\begin{lesn}
		\p^c\paren{X_i+x\in dw_i,i\in[n]_0,X_i+x\in \weyl,i\in [N]}.
	\end{lesn}On such set, for $i\in [n]_0$ we have $X_i+x=w_i\in \weyl$, while for $n+i\in \{n+1,\ldots, N \}$
	\begin{lesn}
		\left\{X_n+x\in dw_n,X_{n+i}+x\in \weyl \right\}= \left\{X_n+x\in dw_n,X_{n+i}-X_n+w_n\in \weyl \right\}.
	\end{lesn}Summing over the values of $N$, and using independent and stationary increments of $X$, the numerator is equal to
	\begin{align*}
	& \indi{\cap_1^n\{w_i\in \weyl\}}\p\paren{X_i+x\in dw_i,i\in [n]_0}\\
	& \times \sum_{m\geq n+1}\p\paren{X_{n+i}-X_n+w_n\in \weyl,i\in [n]}\proba{N=n}\\
	& =\indi{\cap_1^n\{w_i\in \weyl\}}\p\paren{X_i+x\in dw_i,i\in [n]_0}e^{-cn}\\
	& \times \p^c\paren{X_i+w_n\in \weyl,i\in [N]},
	\end{align*}by the lack of memory property of $N$. 
	Therefore, the $n$-step transition probability is given by
	\begin{lesn}
		=  \indi{\cap_1^n\{w_i\in\weyl \}}\p\paren{X_i+x\in dw_i,i\in [n]_0}e^{-cn} \times\frac{\p^c\paren{w_n+X_i\in \weyl ,i\in [N]}}{\p^{c}\paren{x+X_i\in \weyl,i\in [N]}}.
	\end{lesn}
	Denote by $X^\uparrow$ the random walk $X$ conditioned to stay ordered up to time $N$. Considering $w_i\in \weyl$ for $i\in [n-1]_0$, we obtain, using that $X$ is a random walk
	\begin{align*}
	&\p^c\paren{X^\uparrow(n)\in dw_n|X^\uparrow(0)= w_0,X^\uparrow(1)\in dw_1,\ldots ,X^\uparrow(n-1)\in dw_{n-1}}\\
	& =  \indi{\{w_n\in \weyl \}}\p\paren{X_n+x\in dw_n|X_{n-1}+x\in dw_{n-1}}e^{-c}\frac{h_c^{\uparrow}\paren{w_n}}{h_c^{\uparrow}\paren{w_{n-1}}}\\
	& =  \indi{\{w_n\in \weyl \}}\p\paren{X_1\in dw_n|X_0\in dw_{n-1}}e^{-c}\frac{h_c^{\uparrow}\paren{w_n}}{h_c^{\uparrow}\paren{w_{n-1}}},
	\end{align*}which is the one-step transition probability, and depends only on $w_{n-1}$ and $w_n$.
\end{proof}

Now we analyze the function $h^\uparrow_c$.
\subsection[Reexpression of the $h$-function of the ordered RW up to a geometric time]{The $h^\uparrow_c$-function}\label{subsectionORWhUparrowCAndLimit}

A priori, $h^\uparrow_c$ is the division of two probabilities converging to zero. 
We reexpress $h^\uparrow_c$ to prove it converges.
Working with the numerator of $h^\uparrow_c(x)$, first sum over all possible values of $N$
\begin{align*}
& \p^c\paren{X_i+x\in \weyl,i\in [\zeta]}\\
& = (1-e^{-c})\paren{1+\sum_1^\infty e^{-cn}\p^n\paren{X_i+x\in \weyl,i\in [n]}}.
\end{align*}Recall that $Y=(X^2-X^1,X^3-X^2)$ and $y=(x_2-x_1,x_3-x_2)$.
It follows that
\begin{equation}\label{eqnhcUsingTheMinimum}
%\begin{split}
\p^c\paren{X_i+x\in \weyl,i\in [\zeta]}/(1-e^{-c})-1
%\p^c\paren{x_1-x_2\leq \underline{\underline{Y}}^1_N,x_2-x_3\leq \underline{\underline{Y}}^2_N}/(1-e^{-c})-1\\
%= \p^c\paren{-y< \underline{\underline{Y}}_N}/(1-e^{-c})-1
=\esp{\sum_1 e^{-cn}\indi{-y< \underline{\underline{Y}}_n}}. 
%\end{split}
\end{equation}For any $n\in \na$, it is known that $X$ and the time-reversed process $X^*$ has the same distribution, with
\begin{lesn}
	X^*_i=X_n-X_{n-i}\ \ \ \ \ \ \ \mbox{ for $0\leq i\leq n$.}
\end{lesn}This chain has components $X^*=(X^{1,*},X^{2,*},X^{3,*})$, and similarly for $Y^*$. 
Then
\begin{lesn}
	\left\{-y< \underline{\underline{Y}}_n \right\}\stackrel{d}{=}\left\{\max\{-Y^{1,*}_j,j\in [n]\}< x_2-x_1,\max\{-Y^{2,*}_j,j\in [n]\}< x_3-x_2 \right\}.
\end{lesn}Define $\overline{Y}^k_n=\max\{Y^k_j,0\leq j\leq n \}$ for $k=1,2$ and $\overline{Y}_n=(\overline{Y}^1_n,\overline{Y}^2_n)$. 
Add and subtract the term $Y^{k,*}_n$ and use $Y^{k,*}_i-Y^{k,*}_n=Y^k_n-Y^k_{n-i}-Y^{k,*}_n=-Y^k_{n-i}$ for $k=1,2$, so 
\begin{equation}\label{eqnEqualityInDnOfDoubleUnderlineYAndOverlineYMinusY}
%& \left\{\max_{i\in [n]}\{Y^1_{n-i}\}-Y^1_n\leq  x_2-x_1,\max_{i\in [n]}\{Y^2_{n-i}\}-Y^2_n\leq  x_3-x_2 \right\}\\
\left\{-y< \underline{\underline{Y}}_n \right\}\stackrel{d}{=} \left\{\overline{Y}_{n-1}-Y_n< y \right\}.
\end{equation}
This implies that $h^\uparrow_c$ can be reexpresed as
\begin{equation}\label{eqnFirstExpresionOfhc}
h^\uparrow_c(x)=\frac{1+\sum_1^\infty e^{-cn}\proba{\overline{Y}_{n-1}-Y_n< y}}{1+\sum_1^\infty e^{-cn}\proba{\overline{Y}_{n-1}-Y_n< 0 }}
\end{equation}

%\subsection{Partitioning $\na$ with regenerative intervals to obtain the limit of the $h_c^\uparrow$ transform}
\subsection[Partitioning $\na$ via the times of a multidimensional ladder height function to obtain the limit of the $h$-function]{Partitioning $\na$ via the times of a multidimensional ladder height function to obtain the limit of $h_c^\uparrow$}\label{subsectionORWPartitioningWithRegenerativeIntervals}

In this subsection, we partition $\na$ at some particular times $\{J_i,i\in\na \}$.
Those are the times in common among the 
%\correccion{weak? revisar si deben ser d\'ebiles}
ascending ladder times of $Y^1$ and $Y^2$, that is, if $(\alpha^k_j,j\geq 0)$ are the strict ascending ladder times of $Y^k$, then $J_i$ is the $i$th time such that $\alpha^1_j=\alpha^2_l$ for some $j,l\in \na$. 
We prove that the subpaths $\{Y_{J_i+n},0\leq n<J_{i+1}-J_i \}_i$ are i.i.d., and at the times $J_i$, every component of the walk $Y$ is at least as big as the current cumulative maximum.
In this sense, the reader should think on those subpaths as \emph{excursions} of $Y$.

%Let $\{\alpha^k_i,i\geq 0 \}$ be defined by $\alpha^k_0=0$, and for $i\geq 1$, define $\alpha^k_{i+1}=\inf\{n>\alpha^k_i:Y^k_n\geq \overline{Y}^k_{n-1} \}$, for $k=1,2$.

Let $J_0=0$ and for $i\in \na$, define
\begin{lesn}
	J_{i+1}=\min\left\{ n>J_i:\overline{Y}^k_{n-1}<Y^k_n,k=1,2 \right\},
\end{lesn}the first time after $J_i$, such that both walks reach the current maximum at the same time. 

\begin{remark}
	Note that $\overline{Y}_{J_i}=Y_{J_i}$, since both processes are at the same maximum. 
	Also, since we assumed $\proba{Y_1>0}>0$, then $\proba{J_1=1}=\proba{\overline{Y}_0<Y_1}=\proba{0<Y_1}$ has positive probability.
\end{remark}

We prove $(J_i,i\geq 0)$ are stopping times. 
Let $(\F_n,n\in \na)$ be the natural filtration of $X$. 
For any $m\in \na$, the event $\{J_1=m\}$ is equal to
%that $Y^1_n<\overline{Y}^1_{n-1}$ or $Y^2_n<\overline{Y}^2_{n-1}$ for any $n\in [m-1]$, and $Y^1_n\geq \overline{Y}^1_{n-1}$ and  $Y^2_n\geq \overline{Y}^2_{n-1}$. 
\begin{align*}
%	&\left\{J_1=m\right\}\\
\left\{\overline{Y}_{j-1}^k\geq Y_{j}^k,1\leq j\leq m-1  \mbox{ for $k=1$ or $k=2$}\right\}\cap \left\{\overline{Y}_{m-1}^k< Y_{m}^k,k=1,2\right\},
\end{align*}which is in $\F_m$.
%This clearly belongs to $\F_m$.
Assuming $J_i$ is a stopping time, the event $\{J_{i+1}=m \}$ is equal to
%$Y^1_{J_i+n}<\overline{Y}^1_{J_i+n-1}$ or $Y^2_{J_i+n}<\overline{Y}^2_{J_i+n-1}$ for any $n\in [m-J_i-1]$, and $Y^1_m\geq \overline{Y}^1_{m-1}$ and  $Y^2_m\geq \overline{Y}^2_{m-1}$. 
\begin{align*}
%&\left\{J_{i+1}=m\right\}\\
\bigcup_{l=i}^{m-1}\paren{\left\{J_i=l\right\}\cap \left\{\overline{Y}_{j-1}^k\geq Y_{j}^k,l+1\leq j\leq m-1  \mbox{ for $k=1$ or $k=2$}\right\}\cap \left\{\overline{Y}_{m-1}^k< Y_{m}^k,k=1,2\right\}},
\end{align*}which also belongs to $\F_m$.
%This also belongs to $\F_m$. 

We prove the independence and distribution between such times.

\begin{lemma}\label{lemmaYProcessBetweenJi}
	For every $i\in \na$, the walk $\{\overline{Y}_{J_i+n-1}-Y_{J_i+n},n\geq 1 \}$ is independent of $\F_{J_i}$ and has the same distribution as $\{\overline{Y}_{n-1}-Y_{n},n\geq 1 \}$. 
\end{lemma}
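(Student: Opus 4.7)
The plan is to reduce the statement to a direct application of the strong Markov property for the random walk, by exploiting the key identity $\overline{Y}_{J_i}=Y_{J_i}$ which was already noted in the remark. Since $Y$ is a linear functional of the increments of $X$, it is itself a random walk in $\re^{d-1}$, and $J_i$ is a stopping time for $\F_n$ (hence for the natural filtration of $Y$). Therefore the shifted process
\begin{equation*}
\widetilde{Y}_m:=Y_{J_i+m}-Y_{J_i},\qquad m\geq 0,
\end{equation*}
is independent of $\F_{J_i}$ and has the same law as $Y$ itself. The goal is to rewrite $\overline{Y}_{J_i+n-1}-Y_{J_i+n}$ purely in terms of $\widetilde{Y}$.

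The first step is to observe, component-wise for $k\in\{1,2\}$, that because $\overline{Y}^k_{J_i}=Y^k_{J_i}$ we have $Y^k_j\leq Y^k_{J_i}$ for every $0\leq j\leq J_i$. Consequently, for any $n\geq 1$,
\begin{align*}
\overline{Y}^k_{J_i+n-1}
&=\max\Bigl(\max_{0\leq j\leq J_i}Y^k_j,\ \max_{J_i+1\leq j\leq J_i+n-1}Y^k_j\Bigr)\\
&=Y^k_{J_i}+\max\Bigl(0,\ \max_{1\leq m\leq n-1}\widetilde{Y}^k_m\Bigr)=Y^k_{J_i}+\widetilde{\overline{Y}}^k_{n-1},
\end{align*}
where $\widetilde{\overline{Y}}^k_{n-1}=\max_{0\leq m\leq n-1}\widetilde{Y}^k_m$ and we used $\widetilde{Y}^k_0=0$. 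Since also $Y^k_{J_i+n}=Y^k_{J_i}+\widetilde{Y}^k_n$, subtracting cancels the (random) level $Y^k_{J_i}$ and yields
\begin{equation*}
\overline{Y}_{J_i+n-1}-Y_{J_i+n}=\widetilde{\overline{Y}}_{n-1}-\widetilde{Y}_n\qquad \text{for every }n\geq 1.
\end{equation*}

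The conclusion is now immediate: the whole process $\{\overline{Y}_{J_i+n-1}-Y_{J_i+n},n\geq 1\}$ is a measurable functional of $\widetilde{Y}$, hence it is independent of $\F_{J_i}$ and has the same distribution as the corresponding functional of $Y$, namely $\{\overline{Y}_{n-1}-Y_n,n\geq 1\}$. The only subtle point is the cancellation step, which relies critically on the defining property $\overline{Y}_{J_i}=Y_{J_i}$ of the times $J_i$; this is precisely why the construction insists that both components of $Y$ hit their running maxima simultaneously at these epochs, rather than working with the individual ladder times of $Y^1$ and $Y^2$. No moment or drift hypothesis is needed for this argument, since it is purely a pathwise/strong Markov statement.
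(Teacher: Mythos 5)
Your proof is correct and follows essentially the same route as the paper: both decompose the running maximum after $J_i$ using the defining identity $\overline{Y}_{J_i}=Y_{J_i}$ so that the process $\{\overline{Y}_{J_i+n-1}-Y_{J_i+n},\,n\geq 1\}$ becomes a functional of the post-$J_i$ increments, and then conclude by the strong Markov property. The only nuance, which the paper states explicitly, is that the strong Markov property should be invoked on the event $\{J_i<\infty\}$.
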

\begin{proof}
	Let $T<\infty$ be a stopping time. 
	For $n\geq 2$, decompose $\overline{Y}_{T+n-1}$ as the maximum up to time $T$ and the maximum between times $\{T+1,\ldots, T+n-1 \}$.
	Hence
	\begin{lesn}
		\overline{Y}_{T+n-1}-Y_{T+n}=\paren{\overline{Y}_T-Y_T}\vee \max\{Y_{T+l}-Y_T,l\in [n-1] \}-(Y_{T+n}-Y_T),
	\end{lesn}and for $n=1$
	\begin{lesn}
		\overline{Y}_T-Y_{T+1}=\paren{\overline{Y}_T-Y_T}\vee (0,0)-\paren{Y_{T+1}-Y_T}.
	\end{lesn}We substitute $T=J_i$ for $i\in \na$ and recall $\overline{Y}_{J_i}=Y_{J_i}$. 
	For $n\in\na$ and $A_m\in \re^2$ with $m\in [n]$, the events
	\begin{align*}
	& \bigcap_{m=1}^{n}\{ \overline{Y}_{J_i+m-1}-Y_{J_i+m}\in A_m\}\\
	& =\bigcap_{m=1}^{n}\left\{(0,0)\vee \max\{ Y_{J_i+l}-Y_{J_i},l\in [m-1]\}-(Y_{J_i+m}-Y_{J_i})\in A_m  \right\}
	\end{align*}are independent of $\F_{J_i}$ under $\{J_i<\infty \}$, by the strong Markov property.
	They also have the same distribution as
	\begin{lesn}
		\bigcap_{m=1}^{n}\left\{(0,0)\vee \max\{ Y_{l},l\in [m-1]\}-Y_{m}\in A_m  \right\} =\bigcap_{m=1}^{n}\left\{\overline{Y}_{m-1}-Y_{m}\in A_m  \right\},
	\end{lesn}recalling that $\overline{Y}_0=Y_0=(0,0)$ under $\p$.
\end{proof}

The following result is crucial to partition the sums in \eqref{eqnFirstExpresionOfhc}.

\begin{lemma}\label{lemmaJiAreIID}
	The times $\{J_{i+1}-J_{i},i\in \na\}$ are i.i.d. and $J_{i+1}-J_i=J_1\circ \theta_{J_i}$, where $\theta$ is the translation operator.
\end{lemma}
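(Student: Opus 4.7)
The plan is to reduce this lemma almost immediately to the preceding Lemma \ref{lemmaYProcessBetweenJi}. The key observation is that $J_1$ is a deterministic (measurable) functional of the sequence $(\overline{Y}_{n-1}-Y_n)_{n\geq 1}$: namely, $J_1=F((\overline{Y}_{n-1}-Y_n)_{n\geq 1})$, where
\begin{equation*}
F(z_1,z_2,\ldots)=\min\{n\geq 1:z_n^k<0,\ k=1,2 \},
\end{equation*}
with the convention $\min\emptyset=\infty$. Once this is noted, transferring the content of Lemma \ref{lemmaYProcessBetweenJi} through $F$ gives everything.

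First, I would rewrite the increments $J_{i+1}-J_i$ in an analogous form. By definition, $J_{i+1}-J_i=\min\{n\geq 1:\overline{Y}^k_{J_i+n-1}<Y^k_{J_i+n},\ k=1,2 \}$, which is exactly
\begin{equation*}
J_{i+1}-J_i=F\bigl((\overline{Y}_{J_i+n-1}-Y_{J_i+n})_{n\geq 1}\bigr).
\end{equation*}
This is the only place the geometric structure of $J_i$ enters, and it uses merely the definition.

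Second, I would invoke Lemma \ref{lemmaYProcessBetweenJi} at the stopping time $J_i$ (on $\{J_i<\infty\}$): the process $(\overline{Y}_{J_i+n-1}-Y_{J_i+n})_{n\geq 1}$ is independent of $\F_{J_i}$ and has the same law as $(\overline{Y}_{n-1}-Y_n)_{n\geq 1}$. Applying the measurable map $F$ to both sequences yields simultaneously that $J_{i+1}-J_i$ is independent of $\F_{J_i}$ and that $J_{i+1}-J_i\stackrel{d}{=}J_1$. The identity $J_{i+1}-J_i=J_1\circ \theta_{J_i}$ is then just a restatement: $\theta_{J_i}$ shifts the random walk, so the centered gap-process of $\theta_{J_i}X$ coincides with $(Y_{J_i+n}-Y_{J_i})_{n\geq 0}$, its max-reflection equals $(\overline{Y}_{J_i+n-1}-Y_{J_i+n})_{n\geq 1}$ (using $\overline{Y}_{J_i}=Y_{J_i}$), and $F$ applied to it returns $J_1\circ\theta_{J_i}$.

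Third, the i.i.d.\ property follows by induction. Since $J_0,J_1,\ldots,J_i$ are $\F_{J_i}$-measurable, so are the past increments $J_1,J_2-J_1,\ldots,J_i-J_{i-1}$; the independence of $J_{i+1}-J_i$ from $\F_{J_i}$ gives independence from those past increments, and the distributional identity with $J_1$ is the same for every $i$. I do not anticipate any real obstacle here: the only subtlety is keeping the conventions consistent on $\{J_i=\infty\}$ (where we set $J_{i+1}=\infty$ as well), which matches the $\min\emptyset=\infty$ convention in $F$, so that the statement remains valid without any finiteness assumption on the $J_i$.
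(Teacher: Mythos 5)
Your proposal is correct and follows essentially the same route as the paper: the paper also rewrites $J_{i+1}-J_i$ as the first time the max-reflected shifted process $(\overline{Y}_{J_i+n-1}-Y_{J_i+n})_{n\geq 1}$ drops below zero in both components (using $\overline{Y}_{J_i}=Y_{J_i}$ to identify this with $J_1\circ\theta_{J_i}$) and then invokes Lemma \ref{lemmaYProcessBetweenJi} for independence of $\F_{J_i}$ and equality in law. Your explicit measurable functional $F$ and the induction step showing past increments are $\F_{J_i}$-measurable simply spell out details the paper leaves implicit.
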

\begin{proof}
	For $i\in \na$ we have
	\begin{align*}
	J_{i+1}-J_i&=\min \{n>0: \overline{Y}_{J_i+n-1}< Y_{J_i+n}\}\\
	%&=\min \{n>0: \overline{Y}_{n-1+J_i}^k- Y_{n+J_i}^k\leq 0,k=1,2\}\\
	&=\min \{n>0: \max\{Y_{J_i+m}-Y_{J_i};0\leq m\leq n-1\}-(Y_{J_i+n}-Y_{J_i})<0\}\\
	%&=\min \{n>0: \max\{Y_{m}^k-Y_0^k;0\leq m\leq n-1\}-(Y_{n}^k-Y_0^k)\leq 0,k=1,2\}\circ \theta_{J_i}\\
	&=J_1\circ \theta_{J_i}.
	\end{align*}Then $J_{i+1}-J_i$ is independent of $\F_{J_i}$ and has the same law as $J_1$, by Lemma \ref{lemmaYProcessBetweenJi}.
\end{proof}

\begin{lemma}\label{lemmaConvergenceOfhUparrowSubc}
	The $h$-function $h^\uparrow_c$ converges as $c\downarrow 0$ to 
	\begin{lesn}
		h^{\uparrow}(x)= 1+\esp{\sum_{n=1}^{J_1-1}\indi{\overline{Y}_{n-1}-Y_{n}<y}},
	\end{lesn}recalling that $y=(x_2-x_1,x_3-x_2)$. 
\end{lemma}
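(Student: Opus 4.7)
The plan is to manipulate the expression \eqref{eqnFirstExpresionOfhc} for $h^\uparrow_c(x)$ by partitioning $\na$ at the stopping times $\{J_i\}_{i\geq 0}$ from Lemma \ref{lemmaJiAreIID}, so that both numerator and denominator reduce to sums over a single ``excursion'' $[1,J_1]$. Throughout I would set $\phi(c):=\esp{e^{-cJ_1}\indi{J_1<\infty}}$ with the convention $e^{-c\cdot\infty}=0$; since $J_1\geq 1$ on $\{J_1<\infty\}$, one has $\phi(c)<1$ for each $c>0$.

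The denominator is the easier of the two. I would observe that $\overline{Y}_{n-1}-Y_n<0$ componentwise occurs exactly when $n\in\{J_i\}_{i\geq 1}$, so
\begin{lesn}
1+\esp{\sum_{n=1}^\infty e^{-cn}\indi{\overline{Y}_{n-1}-Y_n<0}}=1+\sum_{i=1}^\infty \phi(c)^i=\frac{1}{1-\phi(c)},
\end{lesn}
where the i.i.d.\ structure in Lemma \ref{lemmaJiAreIID} gives $\esp{e^{-cJ_i}\indi{J_i<\infty}}=\phi(c)^i$.

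For the numerator I would partition $\sum_{n=1}^\infty$ into the blocks $\sum_{n=J_i+1}^{J_{i+1}}$ (empty when $J_i=\infty$). On $\{J_i<\infty\}$, conditioning on $\F_{J_i}$ and applying Lemma \ref{lemmaYProcessBetweenJi} to the shifted sequence $\{\overline{Y}_{J_i+m-1}-Y_{J_i+m}\}_{m\geq 1}$ factorizes each block as $\phi(c)^i\cdot \esp{\sum_{m=1}^{J_1}e^{-cm}\indi{\overline{Y}_{m-1}-Y_m<y}}$. Summing over $i$ and dividing by the denominator collapses \eqref{eqnFirstExpresionOfhc} to
\begin{lesn}
h^\uparrow_c(x)=(1-\phi(c))+\esp{\sum_{m=1}^{J_1}e^{-cm}\indi{\overline{Y}_{m-1}-Y_m<y}}.
\end{lesn}

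Letting $c\downarrow 0$, the first term tends to $\proba{J_1=\infty}$ and monotone convergence sends the expectation to $\esp{\sum_{m=1}^{J_1}\indi{\overline{Y}_{m-1}-Y_m<y}}$. On $\{J_1<\infty\}$ the summand at $m=J_1$ equals $1$, because every component of $\overline{Y}_{J_1-1}-Y_{J_1}$ is strictly negative while $y\in\re_+^{d-1}$; peeling off this endpoint rewrites the expectation as $\esp{\sum_{m=1}^{J_1-1}\indi{\overline{Y}_{m-1}-Y_m<y}}+\proba{J_1<\infty}$, and adding $\proba{J_1=\infty}$ yields the claimed $h^\uparrow(x)$. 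The main technical obstacle is the case $\proba{J_1=\infty}>0$, where the last block in the partition has infinite length: the convention $e^{-c\infty}=0$ has to be threaded consistently through the strong-Markov factorization, and it is exactly the residual $(1-\phi(c))$ arising from the denominator that recombines with the endpoint contribution on $\{J_1<\infty\}$ to produce the required ``$+1$''.
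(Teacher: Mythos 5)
Your proposal is correct and follows essentially the same route as the paper: partition $\na$ at the times $\{J_i\}$, factorize each block via Lemmas \ref{lemmaYProcessBetweenJi} and \ref{lemmaJiAreIID}, collapse the geometric series $\sum_i\esp{e^{-cJ_i}\indi{J_i<\infty}}$, and conclude by monotone convergence. The only cosmetic differences are that you identify the denominator directly as $\sum_i\phi(c)^i$ (the paper gets the same thing by setting $y=0$ in the block decomposition) and that you peel off the endpoint term $m=J_1$ after taking $c\downarrow0$ rather than before, which recombines with $1-\phi(c)$ to give the same limit.
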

\begin{proof}
	Recall Equation \eqref{eqnFirstExpresionOfhc}.
	Partition $\na$ at times $(J_i,i\in \na)$
	\begin{align*}
	& \esp{\sum_1e^{-cn}\indi{\overline{Y}_{n-1}-Y_n<y}}\\
	%& =1+\esp{\sum_0e^{-cJ_i}\indi{J_i<\infty}\sum_{n=J_i+1}^{J_{i+1}}e^{-c(n-J_i)}\indi{\overline{Y}_{n-1}^1-Y_n^1\leq x_2-x_1,\overline{Y}_{n-1}^2-Y_n^2\leq x_{3}-x_2}}\\
	& =\esp{\sum_0e^{-cJ_i}\indi{J_i<\infty}\sum_{n=1}^{J_{i+1}-J_i}e^{-cn}\indi{\overline{Y}_{n-1+J_i}-Y_{n+J_i}< y}}.
	\end{align*}Conditioning with $\F_{J_i}$ and summing over the values taken by $J_{i+1}-J_i$, the previous equation is equal to
	%\begin{align*}
	%	 =&\E\left\{\sum_0e^{-cJ_i}\indi{J_i<\infty}\sum_{m\geq 0} \sum_{n=1}^{m}e^{-cn}\right.\\
	%	& \left.\times \probac{J_{i+1}-J_i=m,\overline{Y}_{n-1+J_i}-Y_{n+J_i}\leq y}{\F_{J_i}}\right\}.	
	%\end{align*}
	\begin{lesn}
		\E\left\{\sum_0e^{-cJ_i}\indi{J_i<\infty}\sum_{m\geq 1} \sum_{n=1}^{m}e^{-cn} \probac{J_{i+1}-J_i=m,\overline{Y}_{n-1+J_i}-Y_{n+J_i}< y}{\F_{J_i}}\right\}.	
	\end{lesn}Using lemmas \ref{lemmaYProcessBetweenJi} and \ref{lemmaJiAreIID}, we obtain
	\begin{align*}
	&\p^c\paren{X_i+x\in \weyl,i\in [\zeta]}/(1-e^{-c})\\
	& =1+\esp{\sum^\infty_0e^{-cJ_i}\indi{J_i<\infty}}\esp{\sum_{n=1}^{J_1}e^{-cn}\indi{\overline{Y}_{n-1}-Y_{n}<y}}.
	\end{align*}Since $e^{-cJ_i}\indi{J_i=\infty}=0$, we can ignore the indicator $\indi{J_i<\infty}$.
	When $x=0$, the only term that remains in the second expectation above is $e^{-cJ_1}$, since $\overline{Y}_{n-1}^1\geq Y_{n}^1$ or $\overline{Y}_{n-1}^2\geq Y_{n}^2$ for $n<J_1$. 
	It follows that
	\begin{lesn}
		\p^c\paren{X_i\in \weyl,i\in [\zeta]}/(1-e^{-c})
		=1+\esp{\sum^\infty_0e^{-cJ_i}}\esp{e^{-cJ_1}}.
	\end{lesn}Dividing both terms, and using
	\begin{lesn}
		\esp{\sum_{n=1}^{J_1}e^{-cn}\indi{\overline{Y}_{n-1}-Y_{n}< y}}-\esp{e^{-cJ_1}} =\esp{\sum_{n=1}^{J_1-1}e^{-cn}\indi{\overline{Y}_{n-1}-Y_{n}< y }},
	\end{lesn}we have
	\begin{lesn}
		h_c^{\uparrow}(x)=1+\esp{\sum_{n=1}^{J_1-1}e^{-cn}\indi{\overline{Y}_{n-1}-Y_{n}<y}}\frac{\esp{\sum_0^\infty e^{-cJ_i}}}{1+\esp{\sum_0^\infty e^{-cJ_i}}\esp{e^{-cJ_1}}}.
	\end{lesn}Since $J_i=\sum_0^{i-1}(J_{k+1}-J_k)$ is a sum of i.i.d. random variables, then
	\begin{lesn}
		\esp{\sum_0^\infty e^{-cJ_i}}=\paren{1-\esp{e^{-cJ_1}}}^{-1},
	\end{lesn}implying
	\begin{lesn}
		h_c^{\uparrow}(x)=1+\esp{\sum_{n=1}^{J_1-1}e^{-cn}\indi{\overline{Y}_{n-1}-Y_{n}<y}}. 
	\end{lesn}The result follows from the monotone convergence theorem.
\end{proof}

The latter result implies Theorem \ref{teoOrderedRWIsAMarkovChainIntro}.
%\begin{teo}\label{teoOrderedRWIsAMarkovChainIntro}
%Let $x\in\weyl$. 
%Then, under $\p$, the random walk $X+x$ conditioned to stay ordered forever is a Markov chain with transition probabilities
%	\begin{esn}
%		p^{\uparrow}(w,dz)=\indi{z\in \weyl}\frac{h^{\uparrow}(z)}{h^{\uparrow}(w)}p(w,dz)
%	\end{esn}where $w\in \weyl$, 
%	\begin{esn}
%		p(w,dz)=\proba{X_1+w\in dz}
%	\end{esn}is the one-step transition probability of $X$, and $h^{\uparrow}$ is given by
%	%\correccion{Creo q deber\'ia empezar desde cero el $i$}
%	\begin{esn}
%		h^{\uparrow}(x)=1+\esp{\sum_{n=1}^{J_1-1}\indi{\overline{Y}_{n-1}-Y_{n}<y}},
%	\end{esn}with $J_1=\inf\{n>0: \overline{Y}_{n-1}^k< Y_{n}^k,k=1,2\}$.
%\end{teo}
In the next section, we prove that $h^\uparrow$ is (sub)harmonic, and give a simple condition that ensures it is finite.

\section[Properties of the limiting $h$-function and the interpretation of the walk as conditioned to stay ordered forever]{Properties of $h^\uparrow$ and the interpretation of the walk as conditioned to stay ordered forever}\label{sectionPropertiesOfhUparrow}

\subsection[The harmonicity of the $h$-function depends on the first exit time to \weyl]{The harmonicity of $h^\uparrow$ depends on $\esp{\tau}$}
We know that the first exit time from the Weyl chamber is given by
\begin{lesn}
	\tau=\min \{n>0:Y^1_n \wedge Y^2_n\leq 0\}.
\end{lesn}By Lemma \ref{lemmaConvergenceOfhUparrowSubc}, we rewrite $h^\uparrow$ as
\begin{lesn}
	h^\uparrow(x) =\lim_{c\to 0^+}\frac{\p^c_x\paren{\underline{\underline{Y}}_N> 0}}{\p^c\paren{\underline{\underline{Y}}_N>0}}=\lim_{c\to 0^+}\frac{\p_x\paren{\tau>N}}{\p\paren{\tau	>N}}.
\end{lesn}Let $Q_x$ be the law of $X$ killed at the first exit of the Weyl chamber, that is, for $n\in \na$ and $\Lambda \in \F_n$
\begin{lesn}
	Q_x\paren{\Lambda, n<\zeta}=\p_x\paren{\Lambda,n<\tau }.
\end{lesn}Expectations under $Q_x$ will be denoted by $\E^Q_x$. The next lemma gives us conditions to know if $h^\uparrow$ is harmonic or subharmonic. 
It is based on Lemma 1 of \cite{MR2164035}. 

\begin{lemma}\label{lemmahUparrowIsSubharmonicOrHarmonic}
	Let $x\in \weyl$. 
	If $\esp{\tau}<\infty$, then $h^\uparrow$ is subharmonic and 
	\begin{lesn}
		\E^Q_x\paren{h^\uparrow(X_n)\indi{n<\zeta}}< h^\uparrow(x).
	\end{lesn}If $\esp{\tau}=\infty$, then $h^\uparrow$ is harmonic and 
	\begin{lesn}
		\E^Q_x\paren{h^\uparrow(X_n)\indi{n<\zeta}}= h^\uparrow(x).
	\end{lesn}
\end{lemma}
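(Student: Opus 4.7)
The plan is to derive an exact identity relating $h^\uparrow(x)$ to $\E^Q_x\bra{h^\uparrow(X_n);n<\zeta}$, plus a boundary term that vanishes precisely when $\esp{\tau}=\infty$. Following the pattern used for $h$-transforms built from an exponentially killed reference measure (as in Lemma~1 of \cite{MR2164035}), the key idea is to split $\p_x(\tau>N)$ according to whether $N<n$ or $N\geq n$, and then exploit the lack of memory of the geometric $N$ together with the Markov property of $X$ at time $n$.

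Concretely, for $x\in\weyl$, $n\in\na$ and $c>0$, on $\{N\geq n\}$ the conditional law of $N-n$ is again geometric of parameter $1-e^{-c}$ and independent of $\F_n$. Combined with the Markov property at time $n$, this gives
\begin{lesn}
\p_x(\tau>N)=R^c_n(x)+e^{-cn}\,\E^Q_x\bra{\p_{X_n}(\tau>N);\,n<\zeta},
\end{lesn}
where $R^c_n(x)=\sum_{k=0}^{n-1}e^{-ck}(1-e^{-c})\p_x(\tau>k)$ collects the contribution from $\{N<n\}$. Dividing by $\p(\tau>N)$ yields
\begin{lesn}
h^\uparrow_c(x)=\frac{R^c_n(x)}{\p(\tau>N)}+e^{-cn}\,\E^Q_x\bra{h^\uparrow_c(X_n);\,n<\zeta}.
\end{lesn}

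Next I would pass to the limit $c\downarrow 0$ in each piece. Writing $\p(\tau>N)/(1-e^{-c})=\sum_{k\geq 0}e^{-ck}\p(\tau>k)$ and $R^c_n(x)/(1-e^{-c})=\sum_{k=0}^{n-1}e^{-ck}\p_x(\tau>k)$, monotone convergence gives $\p(\tau>N)/(1-e^{-c})\to \esp{\tau}$ and $R^c_n(x)/(1-e^{-c})\to \sum_{k=0}^{n-1}\p_x(\tau>k)$, so
\begin{lesn}
\frac{R^c_n(x)}{\p(\tau>N)}\xrightarrow[c\downarrow 0]{}A(x,n):=\frac{\sum_{k=0}^{n-1}\p_x(\tau>k)}{\esp{\tau}},
\end{lesn}
with $A(x,n)=0$ when $\esp{\tau}=\infty$ and $A(x,n)>0$ when $\esp{\tau}<\infty$ (since $\p_x(\tau>0)=1$). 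For the remaining expectation, the series representation of Lemma~\ref{lemmaConvergenceOfhUparrowSubc} shows that $c\mapsto h^\uparrow_c(y)$ is nondecreasing as $c\downarrow 0$ (each $e^{-cn}$ grows to $1$) with pointwise limit $h^\uparrow(y)$; monotone convergence therefore yields $\lim_c\E^Q_x\bra{h^\uparrow_c(X_n);n<\zeta}=\E^Q_x\bra{h^\uparrow(X_n);n<\zeta}$ as an identity in $[0,\infty]$.

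Passing to the limit in the recursion produces $h^\uparrow(x)=A(x,n)+\E^Q_x\bra{h^\uparrow(X_n);n<\zeta}$; since $h^\uparrow(x)<\infty$ the right-hand side is finite and the identity is meaningful. When $\esp{\tau}=\infty$ the additive term $A(x,n)$ vanishes and the harmonic equality holds; when $\esp{\tau}<\infty$ it is strictly positive and strict subharmonicity follows. The main obstacle is establishing the recursion itself; the remaining limit exchanges reduce to monotone convergence on the series defining $h^\uparrow_c$ and on the two geometric sums, which should be routine thanks to the explicit monotonicity in $c$.
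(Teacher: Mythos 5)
Your proposal is correct and follows essentially the same route as the paper: both split $\p_x(\tau>N)$ via the lack of memory of $N$ and the Markov property at time $n$, divide by $\p(\tau>N)$, and let $c\downarrow 0$ by monotone convergence to obtain the identity $h^\uparrow(x)=\frac{1}{\esp{\tau}}\sum_{k=0}^{n-1}\p_x(\tau>k)+\E^Q_x\paren{h^\uparrow(X_n)\indi{n<\zeta}}$, with the boundary term vanishing exactly when $\esp{\tau}=\infty$. The only cosmetic difference is that you state the recursion for $h^\uparrow_c$ and pass to the limit term by term, while the paper expands $\p_x(\tau>n+N)$ and takes the limit of the combined expression; also note the identity holds in $[0,\infty]$ without invoking finiteness of $h^\uparrow(x)$, which the paper later relies on when proving finiteness.
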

\begin{proof}
	Since we proved in Lemma \ref{lemmaConvergenceOfhUparrowSubc} that the convergence of $h^\uparrow_c$ to $h^\uparrow$ is monotone, then
	\begin{equation}\label{eqnh_subharmonic}
	\E^Q_x\paren{h^\uparrow(X_n)\indi{n<\zeta}}=\lim_{c\to 0^+}\E_x\paren{\frac{\p_{X_n}\paren{\tau>N}}{\p\paren{\tau>N}}\indi{n<\tau}}.
	\end{equation}Using the Markov property
	\begin{align*}
	\p_x\paren{\tau>n+N}& =\E_x\paren{\indi{Y^2_k\wedge Y^1_k>0,k\in [n+N]}}\\
	& =\E_x\paren{\indi{Y^2_k\wedge Y^1_k>0,k\in[n]}\indi{Y^2_k\wedge Y^1_k>0,k\in[N]}\circ \theta_n}\\
	& =\E_x\paren{\indi{\tau> n}\p_{X_n}\paren{\tau> N}},
	\end{align*}which is the numerator in the right-hand side of Equation \eqref{eqnh_subharmonic}.
	Summing over all the values of $N$
	\begin{align*}
	\p_x\paren{\tau>n+N}& =\sum_k \p_x\paren{\tau>n+k,N=k}\\
	%& =\sum_m \p_{x^{\prime}}\paren{\tau>n+m}(1-e^{-c})e^{-cm}\\
	& =e^{cn}\sum_{k\geq n} \p_x\paren{\tau>k}(1-e^{-c})e^{-ck}.
	\end{align*}Starting the sum from $k=0$, we obtain
	\begin{lesn}
		\p_x\paren{\tau>n+N} =e^{cn}\left\{\p_x\paren{\tau>N}-\sum_{0}^{n-1}\p_x\paren{\tau>k}\proba{N=k}\right\}.
	\end{lesn}Thus, the right-hand side of Equation \eqref{eqnh_subharmonic} is equal to
	\begin{align*}
	& \lim_{c\to 0^+}e^{cn}\left\{\frac{\p_x\paren{\tau>N}}{\p\paren{\tau>N}}-\sum_{0}^{n-1} \frac{\p_x\paren{\tau>k}e^{-ck}}{\sum \p\paren{\tau>m}e^{-cm}}\right\}\\
	& =h^\uparrow(x)-\frac{1}{\E(\tau)}\sum_{0}^{n-1} \p_x\paren{\tau>k},
	\end{align*}which proves the lemma, since $\p_x(\tau>0)=\p(x+X_0\in\weyl )=1$.
\end{proof}

\subsection[Finiteness of the $h$-function]{Finiteness of $h^\uparrow$}\label{subsectionORWFinitenessOfhUparrow}

To prove $h^\uparrow(x)<\infty$ for every $x\in \weyl$, we use the remark of Lemma 1 in \cite{MR1001739}. 
In this subsection, the inequality $x>z$ for $x,z\in \re^3$ means there is strict inequality component-wise.

\begin{lemma}\label{lemmahUparrowIsFinite}
	Assume there exists $\epsilon=(\epsilon_1,\epsilon_2)\in \re_+$ such that
	\begin{lesn}
		\proba{(X^2_1-X^1_1,X^3_1-X^2_1)>\epsilon}>0.
	\end{lesn}Then
	\begin{lesn}
		h^\uparrow(x)<\infty\ \ \ \ \ \ \ \ \ \ \ \forall\, x\in \weyl.
	\end{lesn}
\end{lemma}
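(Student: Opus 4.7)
The plan is to establish $h^\uparrow(x) < \infty$ for every $x \in \weyl$ by a case analysis on the drift behavior of $Y$, starting from the representation
\begin{equation*}
h^\uparrow(x) = 1 + \esp{\sum_{n=1}^{J_1-1} \indi{\overline{Y}_{n-1} - Y_n < y}}
\end{equation*}
of Lemma \ref{lemmaConvergenceOfhUparrowSubc} and exploiting the hypothesis $p := \proba{Y_1 > \epsilon} > 0$.

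If some component of $Y$ drifts to $-\infty$, I would bound $h^\uparrow$ crudely by dropping the truncation at $J_1 - 1$: the distributional identity \eqref{eqnEqualityInDnOfDoubleUnderlineYAndOverlineYMinusY} yields $\overline{Y}_{n-1} - Y_n \stackrel{d}{=} -\underline{\underline{Y}}_n$, hence
\begin{equation*}
h^\uparrow(x) \le 1 + \sum_{n=1}^{\infty} \proba{-\underline{\underline{Y}}_n < y} = \E_x(\tau),
\end{equation*}
and $\E_x(\tau) < \infty$ follows from the exponential decay of $\p_x(\tau > n)$ delivered by a Chernoff-type bound applied to the drifting component. If instead every component of $Y$ drifts to $+\infty$, I would appeal to the forthcoming Lemma \ref{lemmahUpArrowTransformWhenSomeComponentDriftsToMinusInftyOrAllToInftyIntro} to write $h^\uparrow(x) = \p_x(\tau=\infty)/\p(\tau=\infty)$; under the hypothesis together with the componentwise positive drift the denominator is strictly positive, so the ratio is trivially bounded by $1/\p(\tau=\infty) < \infty$.

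In the remaining oscillating case, the key input is the regenerative structure at the common ascending ladder times $J_i$ from Lemmas \ref{lemmaYProcessBetweenJi} and \ref{lemmaJiAreIID}. The idea is to dominate $h^\uparrow(x)$ by the bivariate renewal function $\sum_{n \ge 0} \proba{Y_{J_n} \le y}$ built from the i.i.d. ladder increments $Y_{J_i} - Y_{J_{i-1}}$. The hypothesis ensures that on the event $\{J_1 = 1\}$ (of probability at least $p$) the first increment satisfies $Y_{J_1} > \epsilon$ componentwise, so the mixture distribution of the joint ladder heights has expectation bounded below by $\epsilon_k \, p > 0$ in each component; the componentwise SLLN then forces the renewal function to be finite at every $y$. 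The main obstacle will be making the dominance precise: linking the truncated sum over $[0, J_1)$ to the classical bivariate renewal function requires a careful time-reversal and excursion-decomposition argument, and it is precisely here that the hypothesis plays its decisive role by delivering positive drift in each coordinate of the ladder process.
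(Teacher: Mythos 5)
There is a genuine gap, and it sits precisely in the case the hypothesis is designed for. Your drift dichotomy disposes of the two extreme regimes (modulo two smaller issues: the ``Chernoff-type bound'' in the first case presupposes exponential moments, which this paper never assumes --- what you actually need, and what suffices, is the standard fact that the first passage time below a level of a walk with negative drift has finite mean, as cited from Asmussen in Lemma \ref{lemmahUpArrowTransformWhenSomeComponentDriftsToMinusInftyOrAllToInfty}; and in the all-components-drift-to-$+\infty$ case the positivity of $\p\paren{\tau=\infty}$ for possibly dependent components is asserted rather than proved, though it does follow from the hypothesis plus the a.s.\ finiteness of the global minimum). But in the remaining (oscillating) regime your argument is only a plan: the domination of $h^{\uparrow}(x)=1+\esp{\sum_{n=1}^{J_1-1}\indi{\overline{Y}_{n-1}-Y_{n}<y}}$ by a bivariate renewal function built from the joint ladder heights $Y_{J_i}$ is never established, and you flag it yourself as the main obstacle. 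It is not a routine step: the sum defining $h^\uparrow$ runs over times strictly \emph{before} $J_1$, where the walk is by definition not at a joint ladder epoch, so the quantity is controlled by a single excursion rather than by the renewal structure of $(Y_{J_i})_i$; moreover, outside the case where every component drifts to $+\infty$ there is no a priori guarantee that $J_1<\infty$ a.s., nor that the ladder increments have finite means, so the i.i.d.\ renewal process and the componentwise SLLN you invoke are not available without further argument. As it stands, the lemma is not proved in the only case where the hypothesis is actually needed.

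For comparison, the paper's proof makes no case analysis on the drift and never touches renewal theory. It starts from the inequality $h^{\uparrow}(x)\geq \int_{z\in \weyl}\proba{x+X_1\in dz}\,h^{\uparrow}(z)$, which Lemma \ref{lemmahUparrowIsSubharmonicOrHarmonic} provides whether or not $h^\uparrow$ is finite, writes $h^{\uparrow}(x)=h^-(x^-)$ as a function of the gap vector $x^-$, and uses $h^-(0)=1$ together with monotonicity of $h^-$. Feeding $x=(x_1,x_1,x_1)$ into the inequality shows that if $h^-$ were infinite on all of $\{z^->\epsilon\}$ one would get $\proba{X^-_1>\epsilon}=0$, contradicting the hypothesis; iterating from the finite-value point so obtained produces points $z^-_{(n)}>\epsilon+z^-_{(n-1)}$ with $h^-(z^-_{(n)})<\infty$, hence points with arbitrarily large gaps where $h^-$ is finite, and monotonicity then gives $h^\uparrow(x)<\infty$ for every $x\in\weyl$. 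If you want to keep your route you must supply the missing excursion-to-renewal comparison (and deal with possibly infinite $J_1$); the paper's bootstrap avoids both issues and uses the hypothesis exactly once.
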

\begin{proof}
	Note that Lemma \ref{lemmahUparrowIsSubharmonicOrHarmonic} was independent of the finiteness of $h^\uparrow$. 
	Hence, from such lemma and $x\in \weyl$ we have
	\begin{lesn}
		h^\uparrow(x)\geq \int \proba{x+X_1\in dz,1<\tau}h^\uparrow(z)=\int_{z\in \weyl} \proba{x+X_1\in dz}h^\uparrow(z).
	\end{lesn}Define $g(x)=(x_2-x_1,x_3-x_2)$ for $x\in\re_+^3$. 
	For simplicity, instead of $g(x)$ we write $x^-$. 
	So, for instance $X^-=(X^2-X^1,X^3-X^2)$ and $x^-:=(x_2-x_1,x_3-x_2)$. 
	Then, we have
	\begin{lesn}
		\proba{x+X_1\in dz}=\proba{x_1+X^1_1\in dz_1,x^-+X^-_1\in dz^-}.
	\end{lesn}Note from Lemma \eqref{lemmaConvergenceOfhUparrowSubc} that $h^\uparrow(x)$ depends on $x$ only trough $x^-$.
	Define $h^-:\re^2_+\cup\{ (0,0)\}\mapsto \re_+$ as $h^-(x^-):=h^\uparrow(x)$, so
	\begin{lesn}
		h^-(x^-)=1+\esp{\sum_{n=1}^{J_1-1}\indi{\overline{Y}_{n-1}-Y_{n}<x^-}}.
	\end{lesn}It follows that
	\begin{equation}\label{eqnInequalityhUparrowAndOneStepTransition}
	h^-(x^-)\geq \int_{z\in \weyl} \proba{x_1+X^1_1\in dz_1,x^-+X^-_1\in dz^-}h^-(z^-).
	\end{equation}Also, note that $h^-(0)=1$, since $\indi{\overline{X}^-_0-X^-_1\leq 0}=1$ implies $J_1=1$. 
	
	Assume that $h^-(z^-)=\infty$ for every $z^->\epsilon$. 
	Fix any $x_1\in \re$, and use $x=(x_1,x_1,x_1)$ in \eqref{eqnInequalityhUparrowAndOneStepTransition} to obtain
	\begin{lesn}
		1=h^-(0)=h^\uparrow(x)\geq \int_{z\in \weyl\cap\{z\in \re^3:z^->\epsilon \}} \proba{x_1+X^1_1\in dz_1,x^-+X^-_1\in dz^-}h^-(z^-).
	\end{lesn}Hence, it should be the case that
	\begin{lesn}
		0=\proba{x_1+X^1_1\in\re, x^-+X^-_1>\epsilon }=\proba{X^-_1>\epsilon },
	\end{lesn}contradicting the hypothesis.
	Therefore, there exists $z_{(1)}=(x_1,x_1+z_{(1),1},x_1+z_{(1),1}+z_{(1),2})\in \weyl$ such that
	\begin{lesn}
		z^-_{(1)}>\epsilon\ \ \ \ \ \mbox{ and }\ \ \ \ \ h^\uparrow(z_{(1)})=h^-(z^-_{(1)})<\infty.
	\end{lesn}Now, assume $h^-(z^-)=\infty$ for every $z^->\epsilon+z^-_{(1)}$. 
	Use $x=z^-_{(1)}$ in \eqref{eqnInequalityhUparrowAndOneStepTransition} to obtain
	\begin{lesn}
		\infty>h^-(z^-_{(1)})\geq \int_{z\in \weyl\cap\{z\in \re^3:z^->\epsilon +z^-_{(1)}\}} \proba{x_1+X^1_1\in dz_1,z^-_{(1)}+X^-_1\in dz^-}h^-(z^-).
	\end{lesn}Then, it should happen that
	\begin{lesn}
		0=\proba{x_1+X^1_1\in\re, z^-_{(1)}+X^-_1>\epsilon+z^-_{(1)} }=\proba{X^-_1>\epsilon },
	\end{lesn}again contradicting the hypothesis. 
	Hence, there exists $z_{(2)}=(x_1,x_1+z_{(2),1},x_1+z_{(2),1}+z_{(2),2})\in \weyl$ such that
	\begin{lesn}
		z^-_{(2)}>\epsilon+z^-_{(1)}\ \ \ \ \ \mbox{ and }\ \ \ \ \ h^\uparrow(z_{(2)})=h^-(z^-_{(2)})<\infty.
	\end{lesn}
	Continuing in this way, there is some subsequence $(z_{(n)},n\in \na)$, with $z_{(n)}=(x_1,x_1+z_{(n),1},x_1+z_{(n),1}+z_{(n),2})\in \weyl$ satisfying
	\begin{lesn}
		z^-_{(n)}>\epsilon+z^-_{(n-1)}\ \ \ \ \ \mbox{ and }\ \ \ \ \ h^\uparrow(z_{(n)})=h^-(z^-_{(n)})<\infty,
	\end{lesn}for every $n$. 
	
	Fix any $x=(x_1,x_2,x_3)\in\weyl$. 
	We prove that $h^\uparrow(x)<\infty$. 
	Note that in the previous analysis, $x_1$ was arbitrary. 
	Let $n\in \na$ such that
	\begin{lesn}
		z^-_{(n),1}\wedge z^-_{(n),2}>(n\epsilon_1)\wedge (n\epsilon_2)>(x_3-x_2)\vee (x_2-x_1).
	\end{lesn}It follows that
	\begin{align*}
	h^{\uparrow}(x) & =1+\esp{\sum_{n=1}^{J_1-1}\indi{\overline{X}^-_{n-1}-X^-_{n}<x^-}}\\
	&\leq  1+\esp{\sum_{n=1}^{J_1-1}\indi{\overline{X}^-_{n-1}-X^-_{n}< z^-_{(n)}}},
	\end{align*}which is finite by construction.
\end{proof}

\subsection{Ordered random walks as the limit law of random walks conditioned to stay ordered up to a geometric time}

Let $(q_n,n\geq 1)$ be the transition probabilities of $(X,Q)$.
From Theorem \ref{teoOrderedRWIsAMarkovChainIntro}, denote by $(p^{\uparrow}_n,n\geq 1)$ the transition probabilities of $X$ conditioned to stay ordered
\begin{esn}
	p^\uparrow_n(w,dz)=\frac{h^{\uparrow}(z)}{h^{\uparrow}(w)}q_n(w,dz)\ \ \ \ \ \ \ \ \ w\in \weyl,\ n\in \na.
\end{esn}The law of the Markov process with transition probabilities $(p^{\uparrow}_n,n\geq 1)$ and starting from $x\in \weyl$ is denoted by $\p^\uparrow_x$.
Hence, for $n\in \na$ and $\Lambda\in \F_n$
\begin{equation}\label{eqnDefinitionOfPUparrow}
\p^\uparrow_x(\Lambda ,n<\zeta)=\frac{1}{h^\uparrow(x)}\E^Q_x\paren{h^\uparrow(X_n)\indi{\Lambda,n<\zeta}}.
\end{equation}Its lifetime is $\p^\uparrow_x$-finite if $h^\uparrow$ is subharmonic and $\p^\uparrow_x$-infinite if it is harmonic.
Let us prove $(X,\p^\uparrow_x)$ is the limit as $c\to 0+$ of $(X,\p_x)$ conditioned to have ordered components up to a geometric time. 

\begin{lemma}\label{lemmaRWUnderHTransformIsTheLimitOFConditioningUntilExpTime}
	Let $N$ be geometric time with parameter $1-e^{-c}$, independent of $(X,\p)$. 
	Then, for every $x\in \weyl$, every finite $\F_n$-stopping time $T$ and $\Lambda\in \F_T$
	\begin{lesn}
		\lim_{c\to 0^+}\p_x\paren{\Lambda,T\leq N|X(i)\in \weyl, i\in [N]}=\p^\uparrow_x(\Lambda, T<\zeta).
	\end{lesn}
\end{lemma}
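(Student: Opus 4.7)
The plan is to combine the independence of the geometric time $N$ from $X$ with the strong Markov property of $X$ at the finite stopping time $T$, in order to reduce the conditional probability to a closed-form expression involving $h^\uparrow_c(X_T)$, and then to pass to the limit as $c\downarrow 0$ via monotone convergence.

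The first step is to rewrite the numerator $\p_x(\Lambda, T\le N, \tau>N)$. Summing over the values of $N$ and applying the Markov property of $X$ at $T$ on $\{T\le m,\tau>T\}\in\F_T$, together with the substitution $k=m-T$ in the resulting inner sum, I expect the identity
\begin{equation*}
\p_x\paren{\Lambda,T\le N,\tau>N}=\E_x\paren{\indi{\Lambda,T<\tau}\,e^{-cT}\,\p_{X_T}(\tau>N)}.
\end{equation*}
By the very definition of $h^\uparrow_c$ in Proposition \ref{propoTransitionProbsOfXConditionedUpToAGeo}, one has $\p_w(\tau>N)=h^\uparrow_c(w)\,\p(\tau>N)$ for every $w\in\weyl$. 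Hence the common factor $\p(\tau>N)$ cancels between numerator and denominator, producing
\begin{equation*}
\p_x\paren{\Lambda,T\le N\,|\,\tau>N}=\frac{1}{h^\uparrow_c(x)}\,\E_x\paren{\indi{\Lambda,T<\tau}\,e^{-cT}\,h^\uparrow_c(X_T)}.
\end{equation*}

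The second step is to let $c\downarrow 0$. The series representation of $h^\uparrow_c$ from Lemma \ref{lemmaConvergenceOfhUparrowSubc} makes clear that $h^\uparrow_c\uparrow h^\uparrow$ monotonically as $c\downarrow 0$, and the multiplier $e^{-cT}\uparrow 1$. Thus the integrand is monotonically non-decreasing, and the monotone convergence theorem yields
\begin{equation*}
\lim_{c\to 0^+}\p_x\paren{\Lambda,T\le N\,|\,\tau>N}=\frac{1}{h^\uparrow(x)}\,\E_x\paren{\indi{\Lambda,T<\tau}\,h^\uparrow(X_T)}=\p^\uparrow_x(\Lambda,T<\zeta),
\end{equation*}
where the last equality uses the definition of $Q_x$ (under which $\{T<\zeta\}$ corresponds to $\{T<\tau\}$ under $\p_x$) together with formula \eqref{eqnDefinitionOfPUparrow}. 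The normalization is well-defined because $h^\uparrow(x)<\infty$ by Lemma \ref{lemmahUparrowIsFinite}.

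The main obstacle I foresee is justifying the interchange of limit and expectation for a possibly unbounded finite stopping time $T$, and this is precisely what the joint monotonicity in $c$ of both $e^{-cT}$ and $h^\uparrow_c$ disposes of cleanly, so no uniform integrability argument is needed. A minor but important bookkeeping point is the correct application of the strong Markov property at $T$ inside the sum over $m$: this is valid because $\Lambda\cap\{T\le m,\tau>T\}\in\F_T$, and on this set $\{\tau>m\}$ decomposes as $\{\tau>T\}$ together with the corresponding event that the shifted walk started from $X_T$ remains in $\weyl$ for $m-T$ steps.
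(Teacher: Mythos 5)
Your proof is correct, and its first half coincides with the paper's: both arguments reduce $\p_x\paren{\Lambda,T\leq N\,|\,\tau>N}$, via the Markov property at $T$ and the lack of memory of $N$, to the ratio $\E_x\paren{\indi{\Lambda,T<\tau}e^{-cT}h^\uparrow_c(X_T)}/h^\uparrow_c(x)$. Where you genuinely diverge is in the passage to the limit. The paper first treats deterministic $T$ and then sums over $\{T=n\}$, and it justifies the limit by dominated convergence, bounding the integrand by $\indi{\Lambda,T<\tau}h^\uparrow(X_T)/h^\uparrow_{c_0}(x)$ for a fixed $c_0>c$; integrability of this bound forces it to extend Lemma \ref{lemmahUparrowIsSubharmonicOrHarmonic} to stopping times, which is why the paper also checks $\p^\uparrow_x(T<\infty,T<\zeta)=1$ in the harmonic case. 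You instead work with a general finite stopping time directly through the strong Markov property and exploit that both $e^{-cT}$ and $h^\uparrow_c$ (by its series representation from Lemma \ref{lemmaConvergenceOfhUparrowSubc}) increase as $c\downarrow 0$, so monotone convergence applies with no domination at all; finiteness of the limiting numerator is then automatic, since the conditional probabilities are bounded by $1$ and $h^\uparrow(x)<\infty$. Your route thus avoids the stopping-time version of the (sub)harmonicity lemma entirely, at the cost of not producing en route the inequality $\E^Q_x\paren{h^\uparrow(X_T)\indi{T<\zeta}}\leq h^\uparrow(x)$, which in your argument only emerges a posteriori from the bound by $1$. The one step you should write out explicitly is that Equation \eqref{eqnDefinitionOfPUparrow}, stated for fixed $n$, extends to the stopping time $T$ by decomposing over $\{T=n\}$ and using $\Lambda\cap\{T=n\}\in\F_n$; this is what identifies your limit with $\p^\uparrow_x(\Lambda,T<\zeta)$, and you allude to it but do not spell it out.
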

\begin{proof}
	First we use a deterministic time $T\in \na$. 
	Note that $\{T<\tau \}=\{X(i)\in \weyl, i\in [T]\}$.
	We work with $\p_x\paren{\Lambda,T\leq N,X(i)\in \weyl, i\in [N]}$. 
	Separating in the first $T$ values of $X$ and summing over all the values of $N$
	\begin{align*}
	& \p_x\paren{\Lambda,T\leq N,X(i)\in \weyl, i\in [N]}\\
	& = \sum_{n\geq T}\p_x\paren{\Lambda,T\leq n,X(i)\in \weyl, i\in [T],X(T+i)\in \weyl, i\in [n-T]}\proba{N=n},
	\end{align*}starting the sum at zero and using the Markov property at $\F_T$
	\begin{align*}
	& \p_x\paren{\Lambda,T\leq N,X(i)\in \weyl, i\in [N]}\\
	& = e^{-cT}\sum_{n\geq 0}\p_x\paren{\Lambda,T<\tau,\p_x\paren{X(T+i)\in \weyl, i\in [n]|\F_T}}\proba{N=n}\\
	& = e^{-cT}\p_x\paren{\Lambda,T<\tau,\p_{X_T}\paren{N<\tau}}\\
	& = \p_x\paren{\Lambda,T<\tau,t\leq N,\p_{X_T}\paren{N<\tau}}.
	\end{align*}Now, consider $c_0>0$ and any $c\in (0,c_0)$. 
	Recall from Lemma \ref{lemmaConvergenceOfhUparrowSubc} that $h^\uparrow_c$ increases to $h^\uparrow$, hence
	\begin{align*}
	\indi{\Lambda,T<\tau,T\leq N }\frac{\p_{X_T}\paren{\tau >N}}{\p_x\paren{\tau >N}}&= \indi{\Lambda,T<\tau,T\leq N}\frac{h^\uparrow_c(X_T)}{h^\uparrow_c(x)}\\
	&\leq  \indi{\Lambda,T<\tau}\frac{h^\uparrow(X_T)}{h^\uparrow_{c_0}(x)}. 
	\end{align*}Taking expectations on both sides and using Lemma \ref{lemmahUparrowIsSubharmonicOrHarmonic}
	\begin{lesn}
		\E_x\paren{\indi{\Lambda,T<\tau,T\leq N}\frac{\p_{X_T}\paren{\tau >N}}{\p_x\paren{\tau >N}}}\leq \frac{h^\uparrow(x)}{h^\uparrow_{c_0}(x)},
	\end{lesn}and the right-hand side is finite by Lemma \ref{lemmahUparrowIsFinite}. 
	Hence, by Lebesgue's dominated convergence theorem
	\begin{lesn}
		\lim_{c\to 0^+}\p_x\paren{\Lambda,T\leq N|\tau>N}=\lim_{c\to 0^+}
		\E_x\paren{\indi{\Lambda,T<\tau,T\leq N }\frac{\p_{X_T}\paren{\tau >N}}{\p_x\paren{\tau >N}}}=\p^\uparrow_x\paren{\Lambda, T<\zeta}.
	\end{lesn}
	
	Let us prove the same convergence for any finite stopping time $T$.
	Summing over all the values of $T$, the equality
	\begin{lesn}
		\p_x\paren{\Lambda,T\leq N<\tau}=\p_x\paren{\Lambda,T<\tau,t\leq N,\p_{X_T}\paren{N<\tau}}
	\end{lesn}and Equation \eqref{eqnDefinitionOfPUparrow} holds for $T$.
	We need to prove Lemma \ref{lemmahUparrowIsSubharmonicOrHarmonic} holds true for any stopping time $T<\infty$ a.s.
	Summing over all the values of $T$, in the subharmonic case
	\begin{align*}
	\E^Q_x\paren{h^\uparrow(X_T)\indi{T<\zeta}}& =\sum_n\E_x\paren{h^{\uparrow}(X_n)\indi{n<\tau,T=n}}\\
	& \leq  h^\uparrow(x)\sum_n \p^\uparrow_x\paren{T=n, n<\zeta}\\
	& =  h^\uparrow(x)\p^\uparrow_x\paren{T<\infty, T<\zeta},
	\end{align*}which is smaller than $h^\uparrow(x)$. 
	In the harmonic case, the inequality above is an equality, so it remains to prove that $\p^\uparrow_x(T<\infty, T<\zeta)=1$. 
	But this is clear since
	\begin{lesn}
		\p^\uparrow_x(T=\infty,T<\zeta)=\lim_n\frac{1}{h^\uparrow(x)}\E_x\paren{h^{\uparrow}(X_T)\indi{T>n}}=0,
	\end{lesn}by monotone convergence.
	%\begin{align*}
	%	\p^\uparrow_x\paren{T<\infty}& = \frac{1}{h^\uparrow(x)}\E_x\paren{h^{\uparrow}(X_T)\indi{T<\infty}}\\
	%	& = \frac{1}{h^\uparrow(x)}\E_x\paren{h^{\uparrow}(X_T)}\\
	%	& = \sum \frac{1}{h^\uparrow(x)}\E_x\paren{h^{\uparrow}(X_n)\indi{T=n}}\\
	%	&= \sum_n \p^\uparrow_x\paren{T=n}\\
	%	&=\correccion{ 1.}
	%\end{align*}
\end{proof}

In the next section, we obtain several reexpresions of the $h$-function.

%%%%%%%%%%%%%%%%%%%%%%%%%%%%%%%%%%%%%%%%%%%%%%%%%%%%%%%%%%%%%%
\section[Reexpressions of the $h$-function]{Reexpressions of $h^\uparrow$}\label{sectionORWReexpressionsOfhUparrow}

\subsection{Reexpresions using the minimum of the descending ladder times of the components}

Changing the measure to start at zero, we have
\begin{lesn}
	\esp{\sum_1^\infty e^{-cn}\indi{-y< \underline{\underline{Y}}_n}}=\E_x\paren{\sum_1^\infty e^{-cn}\indi{0<\underline{\underline{Y}}_n}}.
\end{lesn}For $k=1,2$, denote by $(\beta^k_i,i\in \na)$ the strict descending ladder times of $Y^k$, that is $\beta^k_0=0$ and for $i\in \na$ the time $\beta^k_i$ is the smallest index $n$ such that $Y^k(\beta^k_{i-1}+n)< Y^k(\beta^k_{i-1})$.
The above sum stops when one component $Y^k$ becomes negative, that is, at $\beta^1_1\wedge \beta^2_1$.
Then
\begin{align*}
h_c^{\uparrow}(x)&=\frac{1+\E_x\paren{\sum_1^\infty e^{-cn}\indi{0<\underline{\underline{Y}}_n}}}{1+\E\paren{\sum_1^\infty e^{-cn}\indi{0<\underline{\underline{Y}}_n}}}\\
&=\frac{1+\E_x\paren{\sum_1^{\beta_1^1\wedge \beta_1^2-1}e^{-cn}}}{1+\E\paren{\sum_1^{\beta_1^1\wedge \beta_1^2-1}e^{-cn}}}.
\end{align*}This equality allows us to prove the next proposition.

\begin{lemma}\label{lemmahUpArrowTransformWhenSomeComponentDriftsToMinusInftyOrAllToInfty}
	If some component of $Y$ drifts to $-\infty$, the $h$-function $h^\uparrow$ is given by
	\begin{lesn}
		h^{\uparrow}(x)=\frac{\E_x\paren{\beta_1^1\wedge \beta_1^2}}{\E\paren{\beta_1^1\wedge \beta_1^2}}=\frac{\E_x\paren{\tau}}{\E\paren{\tau}}.
	\end{lesn}If every component drifts to $+\infty$ and 
	%$\proba{Y_1>0}>0$
	$\p\paren{\beta_1^1\wedge \beta_1^2=\infty}>0$, then
	\begin{lesn}
		h^{\uparrow}(x)=\frac{\p_x\paren{\beta_1^1\wedge \beta_1^2=\infty}}{\p\paren{\beta_1^1\wedge \beta_1^2=\infty}}=\frac{\p_x\paren{\tau=\infty}}{\p\paren{\tau=\infty}}.
	\end{lesn}
\end{lemma}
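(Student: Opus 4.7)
The starting point is the representation of $h_c^{\uparrow}(x)$ derived just before the lemma,
\[
h_c^{\uparrow}(x) = \frac{1 + \E_x\paren{\sum_{n=1}^{\tau - 1} e^{-cn}}}{1 + \E\paren{\sum_{n=1}^{\tau - 1} e^{-cn}}},
\]
together with Lemma \ref{lemmaConvergenceOfhUparrowSubc}, which guarantees $h_c^{\uparrow}(x) \uparrow h^{\uparrow}(x)$ as $c \downarrow 0^+$. Absorbing the leading $1$ into the sum via $1 + \sum_{n=1}^{\tau - 1} e^{-cn} = \sum_{n=0}^{\tau - 1} e^{-cn}$, the plan is to carry out an Abelian-type limit as $c \downarrow 0^+$ on the resulting Dirichlet-type sum, treating each of the two cases separately.

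In the first case, assume some component $Y^k$ drifts to $-\infty$. Then by standard Wald-type reasoning $\beta_1^k$ is almost surely finite with finite expectation, and hence so is $\tau = \beta_1^1 \wedge \beta_1^2 \leq \beta_1^k$; the same holds under $\p_x$ by the strong Markov property. The partial sums $\sum_{n=0}^{\tau - 1} e^{-cn}$ increase pointwise to $\tau$ as $c \downarrow 0^+$, so the monotone convergence theorem applied separately to the numerator and denominator yields
\[
\lim_{c \to 0^+} \E_x\paren{\sum_{n=0}^{\tau - 1} e^{-cn}} = \E_x(\tau), \qquad \lim_{c \to 0^+} \E\paren{\sum_{n=0}^{\tau - 1} e^{-cn}} = \E(\tau),
\]
and dividing gives $h^{\uparrow}(x) = \E_x(\tau)/\E(\tau)$.

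In the second case, assume every component of $Y$ drifts to $+\infty$ and $\p(\tau = \infty) > 0$. Now both $\E(\tau)$ and $\E_x(\tau)$ are infinite, so one cannot pass to the limit in the numerator and denominator directly. The key manipulation is the algebraic identity
\[
(1 - e^{-c}) \sum_{n=0}^{\tau - 1} e^{-cn} = (1 - e^{-c\tau})\indi{\tau < \infty} + \indi{\tau = \infty},
\]
valid on both $\{\tau < \infty\}$ (geometric partial sum) and $\{\tau = \infty\}$. Multiplying numerator and denominator of $h_c^{\uparrow}(x)$ by the common factor $1 - e^{-c}$ and taking expectations, the remaining term $\E_x((1 - e^{-c\tau})\indi{\tau < \infty})$ tends to $0$ by dominated convergence (the integrand is bounded by $\indi{\tau < \infty}$ and tends pointwise to $0$), so
\[
\lim_{c \to 0^+}(1 - e^{-c})\paren{1 + \E_x\paren{\sum_{n=1}^{\tau-1} e^{-cn}}} = \p_x(\tau = \infty),
\]
and analogously for the denominator with $\p$ in place of $\p_x$. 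Dividing yields the second formula.

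The main obstacle I anticipate is bookkeeping rather than analytic: the identification $\tau = \beta_1^1 \wedge \beta_1^2$ compares the strict ladder time ($Y^k(n) < Y^k(0)$) against the non-strict exit condition ($Y^k_n \leq 0$), and one must shift the reference level to zero when starting from $x \in \weyl$, so the two events coincide only under a mild no-zero-hitting assumption on each coordinate of $Y$. A secondary point is justifying $\E(\tau) < \infty$ in Case 1, which relies on interpreting \emph{drifts to $-\infty$} as having strictly negative drift, not merely $Y^k_n \to -\infty$ almost surely; under this reading the finiteness follows from the standard Wald identity for the first descending ladder epoch.
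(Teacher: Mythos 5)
Your proposal is correct and follows essentially the same route as the paper: the same representation of $h_c^{\uparrow}(x)$ in terms of $\sum_{n=1}^{\beta_1^1\wedge\beta_1^2-1}e^{-cn}$, monotone convergence applied to numerator and denominator when some component of $Y$ drifts to $-\infty$, and the identity $(1-e^{-c})\sum_{n=0}^{\tau-1}e^{-cn}=(1-e^{-c\tau})\indi{\tau<\infty}+\indi{\tau=\infty}$ with bounded convergence when every component drifts to $+\infty$. The only divergence is in your closing caveats: the paper gets $\E\paren{\beta_1^k}<\infty$ from a cited fluctuation-theory result that requires only $Y^k\to-\infty$ a.s.\ (a strictly negative mean is not needed), and it makes the identification of $\tau$ with $\beta_1^1\wedge\beta_1^2$ silently, which is precisely the bookkeeping point you flag.
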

\begin{proof}
	If $Y^k$ drifts to $-\infty$ for some $k=1,2$, then $\esp{\beta^1_1\wedge \beta^2_1}\leq \esp{\beta^k_1}<\infty$ by Proposition 9.3, page 167 of \cite{MR1876169}. 
	Therefore, by the monotone convergence theorem
	\begin{lesn}
		h_c^{\uparrow}(x)\to \frac{1+\E_{x}\paren{\beta_1^1\wedge \beta_1^2-1}}{1+\E\paren{\beta_1^1\wedge \beta_1^2-1}}.
	\end{lesn}
	
	If every component drifts to $+\infty$, then $Y^1\wedge Y^2$ has a finite minimum with positive probability. 
	By hypothesis $\proba{\beta^1_1\wedge \beta^2_1=\infty}>0$, so 
	\begin{align*}
	h_c^{\uparrow}(x)&=\frac{\E_x\paren{\sum_0^{\beta_1^1\wedge \beta_1^2-1}e^{-cn}}}{\E\paren{\sum_0^{\beta_1^1\wedge \beta_1^2-1}e^{-cn}}}\\
	&=\frac{\E_x\paren{\paren{1-e^{-c(\beta_1^1\wedge \beta_1^2)}}\indi{\beta_1^1\wedge \beta_1^2<\infty}+\indi{\beta_1^1\wedge \beta_1^2=\infty}}}{\E\paren{\paren{1-e^{-c(\beta_1^1\wedge \beta_1^2)}}\indi{\beta_1^1\wedge \beta_1^2<\infty}+\indi{\beta_1^1\wedge \beta_1^2=\infty}}}\\
	&\to \frac{\p_x\paren{\beta_1^1\wedge \beta_1^2=\infty}}{\p\paren{\beta_1^1\wedge \beta_1^2=\infty}}.
	\end{align*}
\end{proof}

\subsection{Reexpresions using the union of the descending ladder times}

Let $\{\beta_1,\beta_2,\ldots \}$ be the ordered union of the positive strict descending ladder times of $Y^1$ and $Y^2$, that is, the ordered union of  $\{\beta^1_i,\beta^2_j,i,j\geq 1 \}$. 
Define $\beta_0=1$.
Denoting by $g_n=\beta_n$ and $d_n=\beta_{n+1}$ for $n\geq 0$, the set $\{g_n,g_n+1,\ldots, d_n-1 \}$ is the $n$th interval where $\underline{\underline{Y}}$ remains constant. 
Partitioning $\na$ on such intervals, from Equation \eqref{eqnhcUsingTheMinimum}
\begin{align*}
& 1+\E\paren{\sum_1^\infty e^{-cn}\indi{-y< \underline{\underline{Y}}_n}}\\
& = \esp{\sum_{n\geq 0}\indi{g_n<\infty}\sum_{k=g_n}^{d_n-1}e^{-c(k-g_n)}e^{-cg_n}\indi{-y<\underline{\underline{Y}}_k}} \\
%& = \esp{\sum_{n=0}e^{-cg_n}\indi{g_n<\infty}\indi{x_1-x_2\leq \underline{Y}^1_{g_n},x_2-x_3\leq \underline{Y^2}_{g_n}}\sum_{k=g_n}^{d_n-1}e^{-c(k-g_n)}}\\
& = \esp{\sum_{n\geq 0}e^{-cg_n}\indi{g_n<\infty,-y< \underline{\underline{Y}}_{g_n}}\sum_{k=0}^{d_n-g_n-1}e^{-ck}}.
\end{align*}The above equation for $x=0$ is
\begin{lesn}
	\E\paren{\sum_{n\geq 0} e^{-cn}\indi{0< \underline{\underline{Y}}_n}}=\esp{\sum_{k=0}^{d_0-1}e^{-ck}}.
\end{lesn}Note that $d_0=\beta^1_1\wedge \beta^2_1$.
Also, note that $-y\leq 0\leq \underline{\underline{Y}}_{d_0-1}$, therefore 
\begin{align*}
h_c^{\uparrow}(x) & = \frac{\esp{\sum_0^{d_0-1}e^{-ck}}+ \sum_{n\geq 1}\esp{e^{-cg_n}\indi{g_n<\infty,-y< \underline{\underline{Y}}_{g_n}}\sum_{k=0}^{d_n-g_n-1}e^{-ck}}}{\esp{\sum_0^{d_0-1}e^{-ck}}}\\ &=1+\paren{\esp{\sum_0^{d_0-1}e^{-ck}}}^{-1}\sum_{n\geq 1}\esp{e^{-cg_n}\indi{g_n<\infty,-y< \underline{\underline{Y}}_{g_n}}\sum_{k=0}^{d_n-g_n-1}e^{-ck}}.
\end{align*}As before, depending on the asymptotic behavior of the components, we can obtain a limit.

\begin{propo}
	If some component of $Y$ drifts to $-\infty$, then
	\begin{lesn}
		\esp{d_n-g_n}<\infty\ \ \ \ \ \ \forall n,
	\end{lesn}and the $h$-function is
	\begin{lesn}
		h^\uparrow(x)=1+\sum_{n\geq 1}\frac{\esp{d_n-g_n;g_n<\infty,-y< \underline{\underline{Y}}_{g_n}}}{\esp{d_0}}.
	\end{lesn}If every component drifts to $+\infty$ and $\proba{d_0=\infty}>0$, then
	\begin{lesn}
		h^\uparrow(x)=1+\sum_{n\geq 1}\frac{\proba{g_n<\infty,-y< \underline{\underline{Y}}_{g_n},d_n-g_n=\infty}}{\proba{d_0=\infty}}.
	\end{lesn}
\end{propo}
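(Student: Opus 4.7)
The plan is to pass to the limit $c\downarrow 0^+$ in the expression for $h_c^\uparrow(x)$ displayed immediately before the proposition, identifying the limit as $h^\uparrow(x)$ via Lemma~\ref{lemmaConvergenceOfhUparrowSubc}. The two cases require different normalizations because the geometric sums $\sum_0^{d_0-1}e^{-ck}$ and $\sum_0^{d_n-g_n-1}e^{-ck}$ behave quite differently at $c=0$.

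In the drift-to-$-\infty$ case, the first step is to verify $\esp{d_n-g_n}<\infty$ for every $n$. Since $\{\beta_i\}_{i\geq 1}$ is the ordered union of $\{\beta^1_j\}_j$ and $\{\beta^2_j\}_j$, the $(n+1)$th element is bounded by $\beta^k_{n+1}$ for the component $Y^k$ that drifts to $-\infty$. Iterating the strong Markov property at the times $\beta^k_j$ gives $\esp{\beta^k_{n+1}}=(n+1)\esp{\beta^k_1}<\infty$ by Proposition~9.3 of \cite{MR1876169}, so $d_n-g_n\leq \beta_{n+1}\leq \beta^k_{n+1}$ has finite mean. For each fixed $n$, monotone convergence then yields $\sum_0^{d_n-g_n-1}e^{-ck}\uparrow d_n-g_n$, whence the $n$th numerator term converges to $\esp{d_n-g_n;g_n<\infty,-y<\underline{\underline{Y}}_{g_n}}$, and similarly the denominator converges to $\esp{d_0}$. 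To interchange the limit with the outer sum over $n$, I would use the pointwise identity
\begin{lesn}
\sum_{n\geq 0}(d_n-g_n)\indi{g_n<\infty,-y<\underline{\underline{Y}}_{g_n}}=\sum_{k\geq 1}\indi{-y<\underline{\underline{Y}}_k},
\end{lesn}
which follows because $\underline{\underline{Y}}$ is constant equal to $\underline{\underline{Y}}_{g_n}$ on each interval $[g_n,d_n-1]$ and those intervals partition $\{k\geq 1\}$. Taking expectations and applying Lemma~\ref{lemmahUpArrowTransformWhenSomeComponentDriftsToMinusInftyOrAllToInfty} one obtains $\E_x(\tau)-1<\infty$, furnishing a summable dominating sequence for dominated convergence.

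In the drift-to-$+\infty$ case, both $d_0$ and $d_n$ can be infinite with positive probability, so the renormalization $(1-e^{-c})$ is needed. Using $(1-e^{-c})\sum_0^{m-1}e^{-ck}=1-e^{-cm}$ with the convention $e^{-c\cdot\infty}=0$, I would rewrite
\begin{lesn}
h_c^\uparrow(x)-1=\frac{\sum_{n\geq 1}\esp{e^{-cg_n}\indi{g_n<\infty,-y<\underline{\underline{Y}}_{g_n}}\paren{1-e^{-c(d_n-g_n)}}}}{\esp{1-e^{-cd_0}}}.
\end{lesn}
All integrands are bounded by $1$, and pointwise as $c\downarrow 0$ one has $1-e^{-cd_0}\to \indi{d_0=\infty}$ and $e^{-cg_n}(1-e^{-c(d_n-g_n)})\to \indi{g_n<\infty,d_n-g_n=\infty}$, so dominated convergence delivers the denominator limit $\proba{d_0=\infty}$ and each numerator term its limit $\proba{g_n<\infty,-y<\underline{\underline{Y}}_{g_n},d_n-g_n=\infty}$.

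The main obstacle is justifying the interchange of $\lim_c$ with $\sum_n$ in this second case. I would exploit that if every component $Y^k$ drifts to $+\infty$ then $\proba{\beta^k_1=\infty}>0$, so by the strong Markov property applied at successive ladder times the total number $N^k$ of strict descending ladder times of $Y^k$ is geometric with finite mean. Since the number $K=\#\{n\geq 0:g_n<\infty\}$ of finite elements of the ordered union satisfies $K\leq 1+N^1+N^2$, one gets $\sum_{n\geq 0}\proba{g_n<\infty}=\esp{K}<\infty$. As the $n$th summand in the numerator is bounded by $\proba{g_n<\infty}$ uniformly in $c$, dominated convergence on $\na$ with counting measure applies and yields the announced formula.
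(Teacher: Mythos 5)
Your proposal is correct and takes essentially the same route as the paper: pass to the limit $c\downarrow 0$ in the expression for $h_c^\uparrow(x)$ displayed just before the proposition, using finite-mean ladder epochs (via Proposition 9.3 of the cited reference) when some component of $Y$ drifts to $-\infty$, and the identity $(1-e^{-c})\sum_{k=0}^{m-1}e^{-ck}=1-e^{-cm}$ together with $\proba{d_0=\infty}>0$ when every component drifts to $+\infty$. The only difference is that you spell out the interchange of the limit with the sum over $n$ (dominating by $\E_x(\tau)-1$ in the first case, where termwise monotonicity in $c$ already suffices, and by the expected finite number of descending ladder epochs in the second, where some such domination is genuinely needed), a justification the paper compresses into an appeal to monotone convergence.
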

\begin{proof}
	As before, if the component $k$ drifts to $-\infty$, the first case follows by monotone convergence theorem and $d_n-g_n\leq \beta^k_j-\beta^k_{j-1}$ for some $j$. 
	The second case follows by
	\begin{lesn}
		\sum_0^{d_n-g_n-1}e^{-ck}=\frac{1-e^{-c(d_n-g_n)}}{1-e^{-c}}\indi{d_n<\infty}+\frac{1}{1-e^{-c}}\indi{d_n=\infty},
	\end{lesn}and using monotone convergence theorem.
\end{proof}

\subsection{Reexpresion as a renovation function}
Recall from the previous section that $(\beta_n,n\geq 0)$ is the ordered union of the strict descending ladder times of $Y^1$ and $Y^2$. 
We have the following result.

\begin{propo}\label{propoHTransformAsARenovationFunction}
	The $h$-function $h\uparrow$ can be expressed as 
	\begin{lesn}
		h^{\uparrow}(x)=1+\sum_{n=1}^{\infty}\proba{-\underline{\underline{Y}}_{\beta_n}<y}.
	\end{lesn}
\end{propo}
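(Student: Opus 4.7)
The plan is to carry further the reexpression of $h_c^{\uparrow}$ derived in Subsection~\ref{subsectionORWPartitioningWithRegenerativeIntervals}, perform a telescoping using the strong Markov property at the ordered ladder times $\beta_n$, and then pass to the limit $c\downarrow 0$.

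First, in the formula displayed just before the statement of the proposition, I would evaluate the two geometric sums appearing in it.  Using $d_n=g_{n+1}=\beta_{n+1}$ and $d_0=\beta_1$, one has
\begin{lesn}
\sum_{k=0}^{d_n-g_n-1}e^{-ck}=\frac{1-e^{-c(\beta_{n+1}-\beta_n)}}{1-e^{-c}},\qquad \esp{\sum_{k=0}^{d_0-1}e^{-ck}}=\frac{1-\esp{e^{-c\beta_1}}}{1-e^{-c}},
\end{lesn}
so that
\begin{lesn}
h_c^{\uparrow}(x)=1+\frac{1}{1-\esp{e^{-c\beta_1}}}\sum_{n\geq 1}\esp{\indi{\beta_n<\infty,\,-y<\underline{\underline{Y}}_{\beta_n}}\paren{e^{-c\beta_n}-e^{-c\beta_{n+1}}}}.
\end{lesn}

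The key step is then to apply the strong Markov property at the stopping time $\beta_n$.  The shifted process $(Y_{\beta_n+\cdot}-Y_{\beta_n})$ is independent of $\F_{\beta_n}$ and has the same law as $Y$ under $\p$, which I would use to conclude that $\beta_{n+1}-\beta_n$ is independent of $\F_{\beta_n}$ and equal in distribution to $\beta_1$.  Since $\indi{-y<\underline{\underline{Y}}_{\beta_n}}$ and $e^{-c\beta_n}$ are both $\F_{\beta_n}$-measurable, this yields
\begin{lesn}
\esp{\indi{-y<\underline{\underline{Y}}_{\beta_n}}e^{-c\beta_{n+1}}}=\esp{\indi{-y<\underline{\underline{Y}}_{\beta_n}}e^{-c\beta_n}}\cdot\esp{e^{-c\beta_1}},
\end{lesn}
so each summand factors as $\esp{\indi{-y<\underline{\underline{Y}}_{\beta_n}}e^{-c\beta_n}}\,(1-\esp{e^{-c\beta_1}})$, and the denominator $1-\esp{e^{-c\beta_1}}$ cancels.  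This leaves the clean formula
\begin{lesn}
h_c^{\uparrow}(x)=1+\sum_{n\geq 1}\esp{\indi{-y<\underline{\underline{Y}}_{\beta_n}}e^{-c\beta_n}}.
\end{lesn}

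To finish, I would let $c\downarrow 0$: each summand is non-negative and $e^{-c\beta_n}$ increases monotonically to $\indi{\beta_n<\infty}$, while the event $\{-\underline{\underline{Y}}_{\beta_n}<y\}$ already implicitly requires $\beta_n<\infty$, so the monotone convergence theorem delivers $h^{\uparrow}(x)=1+\sum_{n\geq 1}\proba{-\underline{\underline{Y}}_{\beta_n}<y}$.  The delicate point is the distributional identification $\beta_{n+1}-\beta_n\stackrel{d}{=}\beta_1$ used in the Markov factorization; one must argue carefully that shifting $Y$ by $Y_{\beta_n}$ effectively restarts the ordered descending ladder structure, keeping track of the running minima of both components at time $\beta_n$.
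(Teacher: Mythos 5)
Your geometric-sum algebra and the final monotone-convergence step are fine, but the step you yourself flag as ``delicate'' is where the argument genuinely breaks: for $d\geq 3$ the ordered union $(\beta_n)$ of the strict descending ladder times of $Y^1$ and $Y^2$ is \emph{not} a renewal sequence. At time $\beta_n$ only the component that has just achieved a new minimum is at its running minimum; the other component sits an $\F_{\beta_n}$-measurable (typically strictly positive) distance above its own running minimum, and $\beta_{n+1}$ is defined through the running minima from time $0$, not through the post-$\beta_n$ increments alone. Hence, although $(Y_{\beta_n+\cdot}-Y_{\beta_n})$ is indeed independent of $\F_{\beta_n}$ with the law of $Y$, it does \emph{not} follow that $\beta_{n+1}-\beta_n$ is independent of $\F_{\beta_n}$ or distributed as $\beta_1$; conditionally on that gap it is stochastically larger than $\beta_1$. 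So the factorization $\esp{\indi{-y<\underline{\underline{Y}}_{\beta_n}}e^{-c\beta_{n+1}}}=\esp{\indi{-y<\underline{\underline{Y}}_{\beta_n}}e^{-c\beta_n}}\esp{e^{-c\beta_1}}$, and with it the cancellation of $1-\esp{e^{-c\beta_1}}$, is unjustified. The only genuine regeneration in this problem occurs at the common ascending ladder times $J_i$, where $\overline{Y}_{J_i}=Y_{J_i}$ holds for \emph{all} components (Lemmas \ref{lemmaYProcessBetweenJi} and \ref{lemmaJiAreIID}); nothing analogous holds at the $\beta_n$, where only one component regenerates.

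Interestingly, the intermediate identity you are aiming for, $h_c^\uparrow(x)=1+\sum_{n\geq 1}\esp{e^{-c\beta_n}\indi{\beta_n<\infty,\,-\underline{\underline{Y}}_{\beta_n}<y}}$, is correct, but the paper reaches (the $c=0$ version of) it by a different mechanism: it starts from $h^{\uparrow}(x)-1=\sum_{n\geq 1}\esp{\indi{\overline{Y}_{n-1}-Y_n<y}\indi{J_1>n}}$ and applies time reversal (duality) separately for each fixed $n$. Under reversal, $\{\overline{Y}_{n-1}-Y_n<y\}$ becomes $\{-\underline{\underline{Y}}_n<y\}$ as in Equation \eqref{eqnEqualityInDnOfDoubleUnderlineYAndOverlineYMinusY}, and $\{J_1>n\}$ becomes the event that $n$ is a strict descending ladder time of \emph{some} component, i.e.\ that $n\in\{\beta_m, m\geq 1\}$; summing over $n$ then gives the proposition with no renewal structure of $(\beta_n)$ needed. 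To repair your proof you must replace the i.i.d.\ assumption on $(\beta_{n+1}-\beta_n)$ by this duality step (or otherwise justify the telescoping while keeping track of the non-regenerating component's distance to its minimum at each $\beta_n$).
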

\begin{proof}
	First we express $h^\uparrow$ as an infinite sum, using Tonelli's theorem and Theorem \ref{teoOrderedRWIsAMarkovChainIntro}, we have
	\begin{equation}\label{eqnhcWithJ1GreaterThanN}
	%	\begin{split}
	h^{\uparrow}(x)-1=\esp{\sum_{n=1}^{J_1-1}\indi{\overline{Y}_{n-1}-Y_n<y}}
	=\sum_{n=1}^{\infty}\esp{\indi{\overline{Y}_{n-1}-Y_n<y}\indi{J_1>n}}.
	%	\end{split}
	\end{equation}The event $\{J_1>n \}$ means that for every $j\in [n]$, there is some $k$, such that the running maximum at time $j-1$ of $Y^k$ is at least $Y^k_j$.
	This is written as
	\begin{lesn}
		\{J_1>n \}=\bigcap_{j=1}^n\bigcup_k\left\{\max\left\{Y_l^{k};0\leq l\leq j-1\right\}-Y_{j}^{k}\geq 0\right\}. 
	\end{lesn}Recall the equality in distribution between $Y$ and $Y^*$, which is $Y$ reversed in time.
	Also, recall the equality in distribution of $-\underline{\underline{Y}}$ and $\overline{Y}_{\cdot-1}-Y_\cdot$ of Equation \eqref{eqnEqualityInDnOfDoubleUnderlineYAndOverlineYMinusY}.
	Hence, we have
	\begin{align*}
	\{ J_1>n\}& \stackrel{d}{=} \bigcap_{j=1}^n\bigcup_k\left\{\max\left\{Y_l^{k,*};0\leq l\leq j-1\right\}-Y_{j}^{k,*}>0\right\}\\
	& = \bigcap_{j=1}^n\bigcup_k\left\{\max\left\{-Y_{n-l}^{k};0\leq l\leq j-1\right\}+Y_{n-j}^{k}>0\right\}\\
	& =  \bigcap_{j=1}^n\bigcup_k\left\{\min\left\{Y_{n-l}^{k};0\leq l\leq j-1\right\}<Y_{n-j}^{k}\right\}.
	\end{align*}In a similar way, we can prove that
	\begin{lesn}
		\left\{\overline{Y}_{n-1}-Y_n\leq y,J_1>n \right\}\stackrel{d}{=}	\left\{\underline{\underline{Y}}^k_n\geq -y_k,\forall k\right\}\bigcap \bigcap_{j=1}^n\bigcup_k\left\{\min\left\{Y_{n-l}^{k};0\leq l\leq j-1\right\}<Y_{n-j}^{k}\right\}.
	\end{lesn}
	Now we prove the last term means $n$ is a strict descending ladder time of some $Y^k$. 
	In fact, reordering the index set, the last term is equal to
	\begin{align*}
	%&\bigcap_{j=1}^n\bigcup_k\left\{\min\left\{Y_{n-l}^{k};0\leq l\leq j-1\right\}<Y_{n-j}^{k}\right\}\\
	& \bigcap_{j=1}^n\bigcup_k\left\{\min\left\{Y_l^k;n-j+1\leq l\leq n\right\}<Y_{n-j}^{k}\right\}\\
	&= \bigcap_{j=1}^n\bigcup_k\left\{\min\left\{Y_l^k;j\leq l\leq n\right\}<Y_{j-1}^{k}\right\}.
	\end{align*}The right-hand side means the future minimum of $Y^k$ up to time $n$ is always smaller than the current value of $Y^k$, for some $k$. 
	Thus, the time $n$ is a strict descending ladder time of some $Y^k$, and
	\begin{lesn}
		h^{\uparrow}(x)=1+\sum_{n=1}^{\infty}\proba{\underline{\underline{Y}}_{\beta_n}>-y}.\qedhere
	\end{lesn}
\end{proof}

The next section is devoted to obtain conditions for the finiteness of $\esp{\tau}$.

\section[Known results about the expectation of the first exit time of \weyl, to ensure the $h$-function is harmonic]{Known results about the expectation of $\tau$ to ensure $h^\uparrow$ is harmonic}\label{sectionORWExpectationOfTau}

In Theorem 1 of \cite{MR2609589}, the tail of the distribution of $\tau$ is computed. 
Explicitly, let $X=(X^1,\ldots, X^d)$ be a random walk with i.i.d. components on $\re$. 
Under the assumptions that the step distribution has mean zero and the $\alpha$ moment is finite for $\alpha=d-1$ if $d>3$, and $\alpha>2$ if $d=3$, they prove 
\begin{lesn}
	\lim_nn^{d(d-1)/4}\p_x\paren{\tau>n}=KV(x), 
\end{lesn}where $K$ is an explicit constant and $V$ is given by
\begin{lesn}
	V(x)=\Delta(x)-\E_{x}(\Delta(X(\tau)))\ \ \ \ \ x\in \weyl\cap S^d,
\end{lesn}with $S\subset \re$ the state space of the random walks, and $\Delta$ defined in \eqref{eqnVandermondeDeterminant}.
This implies that for $x\in \weyl\cap S^d$
\begin{lesn}
	\E_x(\tau)<\infty\ \ \ \ \ \ \ \ \ \ \mbox{whenever }d\geq 3.
\end{lesn}This suggests that $\esp{\tau}<\infty$ in this case.
%\correccion{Empezando en cero? $\E_x(\tau)\leq \esp{\tau}$. }

%Let $h^{(d-1)}$ be the harmonic function constructed in \cite{MR2430709}, that is, the function in \eqref{eqnhTransformOfKonigEichelsbacher}, but for the first $d-1$ components of $X$.
%Denote by $v_1$ such function defined on the first $d-1$ components of $x=(x_1,\ldots, x_d)\in \weyl$, and $v_2$ on the last $d-1$ components. 
%If $v$ is any linear combination of $v_1$ and $v_2$, it is proved in \cite{MR2878783} that
%\begin{lesn}
%	\proba{\tau_x>n}\sim c U(x)n^{-\alpha/2-(k-1)(k-2)/4}\ \ \ \ \ \ \ \ n\to \infty,
%\end{lesn}with $c$ an absolute constant and $\alpha\in (k-2,k-1)$.
%Hence, also in this case $\esp{\tau_x}=\infty $ iff $d\leq 2$. 

In the paper \cite{MR3283611} the author obtains the asymptotic behavior of $\tau_x$, for random walks with non-zero drift killed when leaving general cones on $\re^d$.
Under some assumptions, in particular, the step distribution having all moments and a drift pointing out of the cone, it is proved the existence of a function $U$ such that
\begin{lesn}
	\p\paren{\tau_x>n}\sim \rho c^nn^{-p-d/2}U(x).
\end{lesn}The value $p\geq 1$ is the order of some homogeneous function, and $c\in [0,1]$.
This suggests $\esp{\tau_x}\leq \rho U(x)\sum  c^n<\infty$ whenever $c\in (0,1)$.

In the paper \cite{MR3342657}, the authors obtain
\begin{lesn}
	\proba{\tau_x>n}\sim cV(x)n^{-p/2}\ \ \ \ \ \ \ \ n\to \infty,
\end{lesn}for random walks in a cone, with components having zero mean, variance one, covariance zero, and some finite moment. 
In that case, the value $p$ is
\begin{lesn}
	p=\sqrt{\lambda_1+(d/2-1)^2}-(d/2-1)>0.
\end{lesn}Thus, the expectation of $\tau_x$ is infinite iff
\begin{align*}
1\geq p/2& \iff \paren{d/2+1}^2\geq \lambda_1+(d/2-1)^2\\
& \iff 2d\geq \lambda_1.
\end{align*}

The paper \cite{MR3512425}, computes the asymptotic exit time probability for random walks in cones, under some general conditions. 
The first is that the support of the probability measure of $X(1)$ is not included in any linear hyperplane. 
The second is that, if $L$ is the Laplace transform of the random walk having $x^*$ as a minimum, then $L$ is finite on an open neighborhood of $x^*$, and that this value belongs to the dual cone. 
Under such hypotheses, they prove that
\begin{lesn}
	\lim_{n\to\infty}\p_x\paren{\tau>n}^{1/n}=L(x^*), 
\end{lesn}for all $x\in K_\delta:=K+\delta v$, for some $\delta\geq 0$ and some fixed $v$ in $K^o$.
The authors note that in general, there is no explicit link between the drift $m$ of the walk (if exists), $x^*$ and $L(x^*)$. 
The only exception is when $m\in K$.
In such case, $L(x^*)=1$ iff $x^*=0$. 
Furthermore, when the drift $m$ exists, then $m\in K$ iff $x^*=0$. 
Hence, if we want that $\esp{\tau_x}=\infty$, we should restrict to the case $L(x^*)=1$.

%\begin{equation}\label{eqn}
%\begin{split}
%& \\
%& . 
%\end{split}\end{equation}

\bibliography{OsBib}
\bibliographystyle{amsalpha}
\end{document}